\documentclass[11pt, a4paper]{amsart}
\usepackage[utf8]{inputenc}

\usepackage{amsmath, amsfonts, amssymb, amsthm,mathtools,enumerate}
\allowdisplaybreaks
\usepackage{graphicx}
\usepackage{color}
\usepackage[inline]{enumitem}
\usepackage{hyperref}
\usepackage{multicol}
\usepackage{tikz-cd}
\usepackage[margin=1in]{geometry}
\usepackage{amsmath,amssymb,amsthm,mathtools,mathrsfs}
\usepackage[T1]{fontenc}
\usepackage{lmodern}
\usepackage{microtype}
\usepackage{shuffle}

\usepackage{ulem}
\usepackage{soul}
\newcommand{\Add}[1]{\textcolor{red}{#1}} 

\newtheorem{thm}{Theorem}[section]

\newtheorem{prop}[thm]{Proposition}

{\theoremstyle{definition} \newtheorem{rem}[thm]{Remark}}
{\theoremstyle{definition} \newtheorem{defn}[thm]{Definition}}
{\theoremstyle{definition} \newtheorem{thm-defn}[thm]{Theorem-Definition}}
{\theoremstyle{definition}  \newtheorem{eg}[thm]{Example}}
{\theoremstyle{definition}  }

{\theoremstyle{definition} }
{\theoremstyle{definition} }

\numberwithin{equation}{section}

\newcommand{\Li}{\operatorname{Li}}

\providecommand{\Li}{\operatorname{Li}}

\providecommand{\reg}{\operatorname{reg}}

\title[Iterated cotangent-difference integral]
{Iterated cotangent-difference integrals \\
over totally real cyclotomic fields}

\author{Jin Cao}
\address{School of Mathematics and Physics, University of technology and science Beijing, Beijing, China}
\email{caojin@ustb.edu.cn}

\author{Hidekazu Furusho}
\address{Graduate School of Mathematics, Nagoya University, Furo-cho, Chikusa-ku, Nagoya, 464-8602,
Japan}
\email{furusho@math.nagoya-u.ac.jp}

\date{October 2, 2026}

\begin{document}

\begin{abstract}
%
We study iterated cotangent integrals with cyclotomic shifts and express their values along specified reference paths in terms of cyclotomic multiple zeta values. Differences of cotangent forms extend to a punctured projective line over a totally real cyclotomic field, giving a family of ``real cyclotomic multiple zeta values.'' With rational tangential endpoints, these difference integrals are periods of mixed Tate motives over that field; integral endpoint data yield periods over its ring of integers after inverting primes dividing the level. The associated framed motivic classes generate a Hopf subalgebra under Goncharov's coproduct.
\end{abstract}


\maketitle
\setcounter{tocdepth}{2}
\tableofcontents
\setcounter{section}{-1}
\section{Introduction}
\label{sec:introduction}
Cyclotomic multiple zeta values arise as iterated integrals on
$\mathbb{P}^1\setminus\{0,\mu_N,\infty\}$ and carry rich arithmetic
structures. Their study draws on Chen's theory of iterated integrals
\cite{Chen1977} and the work of Goncharov,
Borwein--Bradley--Broadhurst--Lisonek, and Racinet
\cite{Goncharov1998,BorweinEtAl2001,Racinet2002}.
The motivic and Galois-theoretic aspects were developed in
\cite{DeligneGoncharov2005,Goncharov2005,Brown2012};
in particular, Glanois studied Galois descent for cyclotomic
motivic iterated integrals \cite{Glanois2016}.

Cyclotomic Multiple zeta values have also been studied through trigonometric
integrals, including iterated log-sine and log-tangent integrals
by Umezawa
\cite{Umezawa2020, Umezawa2023,UmezawaLogTangent}.
Bachmann~\cite{BachmannShiftedLogSine2026} proved that Umezawa's
shifted log-sine integrals with odd entries are ordinary multiple
zeta values. He also interpreted these values using iterated
integrals in \(d\theta\) and \(\cot\theta\,d\theta\) on
\([0,\pi/6]\).
In this paper, we study shifted cotangent forms and their
differences, with particular emphasis on their geometry over
totally real cyclotomic fields.

In this paper, we start with iterated cotangent integrals.
Taking differences of the underlying cotangent forms yields
iterated integrals on a punctured projective line over a
totally real cyclotomic field.
Fix \(N\geq3\), let \(E=\mathbb Q(\mu_N)\) and \(F=E^+\).
Let
$S$ be the set of finite places of $F$ lying above the primes dividing
$N$.
For \(a\in\mathbb Z/N\mathbb Z\),
put \(\theta_a=\pi a/N\) on \(\mathbb C/\pi\mathbb Z\). We study the \textit{iterated cotangent integrals}
of the following type
\[
 C_\gamma^{\mathrm{reg}}(a_0;\,a_1,\ldots,a_r;\,b_0)
 :=\operatorname{reg}\int_\gamma
 \eta_{a_1}^{\cot}\cdots\eta_{a_r}^{\cot},
 \qquad
 \eta_{a_i}^{\cot}:=\cot(\theta-\theta_{a_i})\,d\theta,
\]
for \( a_0,b_0,a_1,\ldots,a_r\in\mathbb Z/N\mathbb Z\)
where \(\gamma:\theta_{a_0}\rightsquigarrow\theta_{b_0}\) avoids the punctures in its interior. The path includes specified tangential endpoint data, with regularization as in \cite{Deligne1989}. 
These integrals iterate the differentials \(d\log\sin(\theta-\theta_a)\).

The forms \(\eta_a^{\cot}\) have algebraic representatives on the
\textit{cotangent punctured line} \(X_N^{\cot}\) over \(F\).
Its boundary consists of \(F\)-rational points and one nonsplit quadratic point. After base change to \(E\), a Cayley transformation identifies it with the cyclotomic punctured line. 

Taking differences of integrands removes the contribution of the
quadratic boundary.  More precisely, put
\[
 \eta_{a,b}^{\mathrm{diff}}
 :=
 \eta_a^{\cot}-\eta_b^{\cot}.
\]
For pairs \((a_0,b_0),(a_1,b_1),\ldots,(a_r,b_r) \in \mathbb{Z}/N\mathbb{Z} \times \mathbb{Z}/N\mathbb{Z}\), we define the corresponding
\textit{iterated cotangent-difference integral} by
\[
\begin{aligned}
 C_\gamma^{\mathrm{diff},\mathrm{reg}}
 \bigl(
  a_0;\,(a_1,b_1),\ldots,(a_r,b_r);\,b_0
 \bigr)
 &:=
 \operatorname{reg}\int_\gamma
 \eta_{a_1,b_1}^{\mathrm{diff}}\cdots
 \eta_{a_r,b_r}^{\mathrm{diff}}.
\end{aligned}
\]
The algebraic representatives $\kappa_{a,b}^{\mathrm{diff}}$ of these difference forms $ \eta_{a,b}^{\mathrm{diff}}$ extend across the nonsplit quadratic point to logarithmic forms on the \textit{difference punctured line} \(X_N^{\mathrm{diff}}\), whose punctures are all \(F\)-rational; see Definition~\ref{def:cotangent-difference-integral}.  Our main results are as follows.



\begin{enumerate}
\item \emph{Cyclotomic expressions.}
For the reference paths \(\gamma_{m,n}\) with their prescribed tangential data, Theorem~\ref{thm:cyclotomic-endpoint-formula} gives an explicit formula in radial iterated integrals and tangential rotations. In particular,
\[
 C_{\gamma_{m,n}}^{\mathrm{reg}}(a_0;\,a_1,\ldots,a_r;\,b_0)
 \in\mathcal Z_{N,r},
\]
where \(\mathcal Z_{N,r}\) denotes the \(\mathbb Q\)-span of regularized level-\(N\) cyclotomic multiple zeta values of weight \(r\).

\item \emph{Mixed Tate descent.}
The difference punctured line \(X_N^{\mathrm{diff}}\)
has good reduction over \(\mathcal O_F[1/N]\). 
By the Deligne--Goncharov construction, the iterated
cotangent-difference integral along a Betti path
\(\gamma_v\) is a comparison period of a mixed Tate motive over \(F\),
provided its endpoint tangential base points are defined over \(F\).
Theorem~\ref{thm:cotangent-difference-mixed-tate} asserts that,
if the endpoints are \(S\)-integral, that is, if they generate the tangent lines of the chosen model, then the integral is a period over
\(\mathcal O_{F,S}\), that is, 
\[
 C_{\gamma}^{\mathrm{diff},\mathrm{reg}}
 \bigl(a_0;\,(a_1,b_1),\ldots,(a_r,b_r);\,b_0\bigr)
 \in
 \operatorname{Per}_\sigma
 \bigl(\operatorname{MT}(\mathcal O_F[1/N])\bigr).
\]
Here \(\operatorname{Per}_\sigma\) denotes the algebra of complex
comparison periods for the chosen embedding
\(\sigma:F\hookrightarrow\mathbb C\).

\item \emph{Hopf algebra structure and Goncharov's coproduct.}
Let \(\mathcal B_N\) be the fixed system of integral tangential
base points at all punctures; see
\eqref{Definition of basepoint set}.
For \(\beta_0,\beta_1\in\mathcal B_N\), the motivic path torsor
between these base points gives rise to both comparison periods
and Tate-framed classes. Pairing a word in the difference forms
with a Betti path gives the numerical integral
\[
 C_\gamma^{\mathrm{diff},\mathrm{reg}}
 \bigl(a_0;\,(a_1,b_1),\ldots,(a_r,b_r);\,b_0\bigr),
\]
where \(\beta_0,\beta_1\) are the transported tangential
endpoints. 
The same motivic path torsor, equipped with rational Tate
framings, defines the path-independent class
\[
 I^{\mathfrak m}
 \bigl(\beta_0;
 \kappa^{\mathrm{diff}}_{a_1,b_1}|\cdots|
 \kappa^{\mathrm{diff}}_{a_r,b_r};\beta_1\bigr)
 \in\mathcal A^{\mathcal M}_{F,S}.
\]
Section~\ref{sec:cotangent-difference-motivic-coaction}
constructs these classes and the algebra
\(\mathcal H_N^{\mathrm{diff},\mathfrak m}(\mathcal B_N)\)
they generate.
Theorem~\ref{thm:cotangent-difference-motivic-hopf}
gives an explicit Goncharov coproduct formula and proves that
this algebra is a connected graded Hopf subalgebra of
\(\mathcal A^{\mathcal M}_{F,S}\), the coordinate algebra of
the unipotent mixed Tate motivic Galois group over
\(\mathcal O_{F,S}\).


\item \emph{Low-weight examples. } 
Our construction provides a family of periods over real cyclotomic fields. 
In length one, suitable difference integrals give logarithms of ratios of real cyclotomic \(S\)-units; compare \cite{Sinnott1978}. Higher-weight examples at level four recover classical zeta values. See Section \ref{sec:low-weight-examples}.
\end{enumerate}

The paper is organized as follows. Section~\ref{sec:iterated-cotangent-integrals} introduces the integrals and their geometry, and Section~\ref{sec:cyclotomic-realization} gives their cyclotomic expressions. Section~\ref{sec:cotangent-difference-integrals} constructs the difference punctured line and its integral model. Section~\ref{sec:cotangent-difference-motivic-coaction} establishes the mixed Tate interpretation and Hopf algebra structure, and Section~\ref{sec:low-weight-examples} gives explicit evaluations.

\section{Iterated cotangent integrals and the cotangent punctured line}
\label{sec:iterated-cotangent-integrals}
Throughout this section, let \(N\geq3\), and put
\[
 \zeta_N:=\exp\left(\frac{2\pi\sqrt{-1}}{N}\right),
 \qquad
 E:=\mathbb Q(\mu_N) \  \ (\subset {\mathbb C}),
 \qquad
 F:=E^+.
\]
In this section, we first define iterated cotangent integrals analytically on the angular cylinder.  We then realize them as iterated integrals of logarithmic differential forms on the cotangent punctured line, a punctured projective line defined over the totally real cyclotomic field \(F\).

\subsection{Iterated cotangent integrals}
\label{subsec:definition-iterated-cotangent-integrals}

This subsection fixes the analytic definition of iterated cotangent
integrals and the tangential regularization used throughout the paper.

Let
\[
 \Theta:=\mathbb C/\pi\mathbb Z
\]
be the angular cylinder.  For \(a\in\mathbb Z/N\mathbb Z\), put
\[
 \theta_a:=\frac{\pi a}{N}\pmod{\pi},
 \qquad
 e^{2\sqrt{-1}\theta_a}=\zeta_N^a,
\]
and define
\[
 \Theta_N^\circ
 :=
 \Theta\setminus\{\theta_a:a\in\mathbb Z/N\mathbb Z\}.
\]
The cotangent logarithmic form attached to \(a\) is
\[
 \eta_a^{\cot}
 :=
 \cot(\theta-\theta_a)\,d\theta
 =
 d\log\sin(\theta-\theta_a).
\]
It is a logarithmic differential form on \(\Theta_N^\circ\), with a simple
pole at \(\theta_a\).

We use tangential base points at the punctures.  Thus, a tangential base
point at \(\theta_c\) is a nonzero vector
\(
 \vec v_c\in T_{\theta_c}\Theta.
\)
It fixes a local parameter \(t_c\) by
\[
 t_c(\theta_c)=0,
 \qquad
 dt_c(\vec v_c)=1.
\]
For instance, if \(\vec v_c=\nu_c\partial_\theta\) in a local lift of
\(\theta\), we may take
\( t_c=\frac{\theta-\theta_c}{\nu_c}.\)

Let \(\gamma\) be a path with interior in \(\Theta_N^\circ\) and with
tangential base points at its two endpoints.  If
\(\omega_1,\ldots,\omega_r\) are logarithmic forms, we define
\[
 \operatorname{reg}\int_\gamma\omega_1\cdots\omega_r
 :=
 \operatorname{CT}_{\log\varepsilon_0,\log\varepsilon_1}
 \int_{\gamma_{\varepsilon_0,\varepsilon_1}}
 \omega_1\cdots\omega_r.
\]
Here \(\gamma_{\varepsilon_0,\varepsilon_1}\) is obtained by truncating
\(\gamma\) near its initial and final endpoints using the normalized local
parameters, and \(\operatorname{CT}\) denotes the constant term in the
resulting polynomial in
\(\log\varepsilon_0\) and \(\log\varepsilon_1\).  
This is the standard tangential regularization of iterated integrals; see
Deligne~\cite{Deligne1989}.
It agrees with the usual iterated integral
when the latter converges.

\begin{defn}
\label{def:iterated-cotangent-integral}
Let
\( a_0,a_1,\ldots,a_r,b_0\in\mathbb Z/N\mathbb Z,\)
and let
\(\gamma:\theta_{a_0}\rightsquigarrow\theta_{b_0}\)
be a path whose interior lies in \(\Theta_N^\circ\), equipped with
tangential base points at its endpoints.
The associated \textit{iterated
cotangent integral} is
\[
 C_\gamma^{\reg} (a_0;\,a_1,\ldots,a_r;\,b_0)
 :=
 \operatorname{reg}\int_\gamma
 \eta_{a_1}^{\cot}\cdots\eta_{a_r}^{\cot}.
\]
\end{defn}

When the endpoint labels are clear from \(\gamma\), we abbreviate this as
\[
 C_\gamma^{\reg}(a_1,\ldots,a_r)
 :=
 \operatorname{reg}\int_\gamma
 \eta_{a_1}^{\cot}\cdots\eta_{a_r}^{\cot}.
\]
If \(\gamma\) is represented by an oriented real interval with no
intermediate pole, then
\[
\begin{aligned}
 C_\gamma^{\reg}
 (a_0;\,a_1,\ldots,a_r;\,b_0)
 =
 \operatorname{reg}
 \int_{\theta_{a_0}<t_1<\cdots<t_r<\theta_{b_0}}
 \prod_{j=1}^r
 \cot(t_j-\theta_{a_j})\,dt_j.
\end{aligned}
\]

The path \(\gamma\), including its tangential endpoint data, is part of the
datum.  In particular, different relative homotopy classes of paths can
give different iterated cotangent integrals.

\begin{rem}
Bachmann~\cite[Section~4]{BachmannShiftedLogSine2026}
defines iterated cotangent integrals using the forms
\(d\theta\) and \(\cot\theta\,d\theta\) on \([0,\pi/6]\).
In Remark~4.5 of loc.~cit., he also observes that the upper endpoint can
be replaced by \(\pi/n\), subject to endpoint convergence
conditions. Our definition uses the shifted forms
\(\eta_a^{\cot}\), with paths between cyclotomic angles and
tangential regularization. The common form is
\(\eta_0^{\cot}=\cot\theta\,d\theta\); the form \(d\theta\)
is not included among our defining forms.
\end{rem}

The following examples illustrate ordinary convergence, tangential regularization, and the effect of iterating different cotangent kernels. All real intervals below are oriented from left to right, with endpoint tangent vectors \(+\partial_\theta\) at the initial point and \(-\partial_\theta\) at the final point.

\begin{eg}[Length one]
Suppose that \(\gamma\) is represented by a real interval from \(\theta_{a_0}\) to \(\theta_{b_0}\), with no puncture in its interior, and that \(a_1\neq a_0,b_0\). Using real lifts of the angular coordinates, we obtain
\[
 C_\gamma^{\reg}(a_0;\,a_1;\,b_0)
 =
 \int_{\theta_{a_0}}^{\theta_{b_0}}
 \cot(\theta-\theta_{a_1})\,d\theta
 =
 \log\left|
 \frac{\sin(\theta_{b_0}-\theta_{a_1})}
      {\sin(\theta_{a_0}-\theta_{a_1})}
 \right|.
\]
No regularization is needed in this case. For example, at level \(N=4\), the real interval \(\gamma:\pi/4\rightsquigarrow\pi/2\) gives
\[
 C_\gamma^{\reg}(1;\,0;\,2)
 =
 \log\frac{\sin(\pi/2)}{\sin(\pi/4)}
 =
 \frac12\log 2.
\]
\end{eg}

\begin{eg}
Take \(N=4\), and let \(\gamma_{\mathrm{re}}:0\rightsquigarrow\pi/4\) be the increasing real interval with the tangent vectors specified above. The form \(\eta_0^{\cot}=\cot\theta\,d\theta\) has a pole at the initial endpoint. Since its normalized local parameter is \(\theta\), we have
\[
 \int_\varepsilon^{\pi/4}\cot\theta\,d\theta
 =
 \log\sin(\pi/4)-\log\sin\varepsilon
 =
 -\log\varepsilon-\frac12\log 2+O(\varepsilon^2).
\]
Taking the constant term therefore gives
\[
 C_{\gamma_{\mathrm{re}}}^{\reg}(0;\,0;\,1)
 =
 -\frac12\log 2.
\]
This value uses the specified initial tangent vector; changing its scale changes the regularized value.
\end{eg}


\begin{eg}
Again take \(N=4\) and \(\gamma_{\mathrm{re}}:0\rightsquigarrow\pi/4\). Since
\[
 \eta_2^{\cot}=-\tan\theta\,d\theta,
 \qquad
 \eta_0^{\cot}=\cot\theta\,d\theta,
\]
our convention for the order of integration gives
\[
\begin{aligned}
 C_{\gamma_{\mathrm{re}}}^{\reg}(0;\,2,0;\,1)
 &=
 \int_{0<t_1<t_2<\pi/4}
 (-\tan t_1)\cot t_2\,dt_1\,dt_2 
 =
 \int_0^{\pi/4}\log(\cos t)\cot t\,dt \\
 &=
 \frac14\int_0^{1/2}\frac{\log(1-u)}{u}\,du 
 =
 -\frac14\operatorname{Li}_2\left(\frac12\right)
 =
 -\frac{\pi^2}{48}+\frac18\log^2 2.
\end{aligned}
\]
Here we used \(u=\sin^2t\) and the classical dilogarithm
\(\operatorname{Li}_2(x)=\sum_{n\geq1}x^n/n^2\).
The iterated integral converges without regularization: although
\(\eta_0^{\cot}\) has a pole at \(0\), the inner integral
\(\log(\cos t)\) vanishes to order two there.
\end{eg}

\subsection{An algebraic coordinate over the real cyclotomic field}
\label{subsec:cotangent-algebraic-coordinate}
We next replace the angular coordinate \(\theta\) by an algebraic coordinate \(v\) in which
the cotangent differential forms become logarithmic differential forms defined over
\(F\).

Define
\begin{equation*}
\label{eq:delta-lambda}
 \delta_N:=\zeta_N-\zeta_N^{-1}
 =
 2\sqrt{-1}\sin\left(\frac{2\pi}{N}\right),
 \qquad
 \lambda_N:=-\delta_N^2
 =
 4\sin^2\left(\frac{2\pi}{N}\right).
\end{equation*}
Then
\[
 \overline{\delta_N}=-\delta_N,
 \qquad
 \lambda_N\in F,
 \qquad
 E=F(\delta_N).
\]
We have \(\lambda_N>0\).
Put
\[
 z=e^{2\sqrt{-1}\theta}
\]
and introduce the coordinate
\begin{equation*}
\label{eq:cotangent-coordinate}
 v
 :=
 \frac{z-1}{\delta_N(z+1)}
 =
 \frac{e^{2\sqrt{-1}\theta}-1}
 {\delta_N(e^{2\sqrt{-1}\theta}+1)}.
\end{equation*}
Equivalently,
\[
 z=\frac{1+\delta_Nv}{1-\delta_Nv}.
\]
Using
\(
 \frac{e^{2\sqrt{-1}\theta}-1}
      {e^{2\sqrt{-1}\theta}+1}
 =
 \sqrt{-1}\tan\theta,
\)
we obtain
\[
 v
 =
 \frac{\sqrt{-1}}{\delta_N}\tan\theta
 =
 \frac{\tan\theta}{2\sin(2\pi/N)}.
\]

For \(a\in\mathbb Z/N\mathbb Z\) such that
\(\zeta_N^a\neq-1\), put
\begin{equation*}
\label{eq:cotangent-points}
 v_a
 :=
 \frac{\zeta_N^a-1}
 {\delta_N(\zeta_N^a+1)}
 =
 \frac{\sqrt{-1}}{\delta_N}\tan\theta_a.
\end{equation*}
Complex conjugation gives
\(
 \overline{v_a}=v_a,
\)
and hence
\[
 v_a\in F.
\]
If \(N\) is even and \(a=N/2\), then \(\zeta_N^a=-1\); in this case we set
\[
 v_a:=\infty.
\]
Thus, for every \(a\in\mathbb Z/N\mathbb Z\),
\[
 v_a\in\mathbb P^1(F).
\]

For \(v_a\neq\infty\), define the rational function
\[
 f_a(v)
 :=
 \frac{1+\lambda_Nv^2}{(v-v_a)^2}
 \in F(v)^\times,
\]
and the logarithmic form
\begin{equation}
\label{eq:kappa-finite}
 \kappa_a
 :=
 d\log f_a(v)
 =
 \left(
 \frac{2\lambda_Nv}{1+\lambda_Nv^2}
 -
 \frac{2}{v-v_a}
 \right)dv.
\end{equation}
If \(v_a=\infty\), define instead
\begin{equation*}
\label{eq:kappa-infinity}
 f_a(v):=1+\lambda_Nv^2,
 \qquad
 \kappa_a:=d\log(1+\lambda_Nv^2).
\end{equation*}
In particular, the expression in \eqref{eq:kappa-finite} is used only
when \(v_a\neq\infty\).

Let
\begin{equation}\label{eq:theta-to-v-map}
     \Phi:(\Theta, \theta)\longrightarrow\mathbb (\mathbb{P}^1_{\mathbb{C}},v),
 \qquad
 v=\Phi(\theta)
 :=
 \frac{e^{2\sqrt{-1}\theta}-1}
 {\delta_N(e^{2\sqrt{-1}\theta}+1)},
\end{equation}
where the value is understood to be \(\infty\) when the denominator vanishes. This map satisfies \(\Phi(\theta_a)=v_a\) for every \(a\in\mathbb Z/N\mathbb Z\).

\begin{prop}[Algebraic realization of the cotangent differential form]
\label{thm:algebraic-realization-cotangent-kernel}
For every \(a\in\mathbb Z/N\mathbb Z\), one has
\[
 \Phi^*\kappa_a=-2\,\eta_a^{\cot}
\]
on \(\Theta_N^\circ\).
Let
\(
 \gamma:\theta_{a_0}\rightsquigarrow\theta_{b_0}
\)
be a path equipped with tangential base points, and put
\[
 \gamma_v:=\Phi\circ\gamma:
 v_{a_0}\rightsquigarrow v_{b_0}.
\]
At the endpoints, \(\gamma_v\) is equipped with the tangential base points
obtained by applying \(d\Phi\) to those of \(\gamma\).  Then
\[
 \operatorname{reg}\int_{\gamma_v}
 \kappa_{a_1}\cdots\kappa_{a_r}
 =
 (-2)^r
 C_\gamma^{\reg}
 (a_0;\,a_1,\ldots,a_r;\,b_0).
\]
\end{prop}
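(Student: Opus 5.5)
The proof has two parts. First we compute the pullback \(\Phi^*\kappa_a\) directly. Then we transport the regularized iterated integral along \(\Phi\), using the fact that regularization is natural under maps compatible with tangential base points.

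\textbf{The pullback.} Since \(v=\tan\theta/(2\sin(2\pi/N))\), put \(s:=2\sin(2\pi/N)\), so that \(\lambda_N=s^2\). Then
\[
1+\lambda_Nv^2=1+\tan^2\theta=\cos^{-2}\theta .
\]
For \(v_a\neq\infty\) we have \(v_a=\tan\theta_a/s\), so
\[
v-v_a=\frac{\tan\theta-\tan\theta_a}{s}=\frac{\sin(\theta-\theta_a)}{s\cos\theta\cos\theta_a}.
\]
Hence
\[
\Phi^*f_a=\frac{s^2\cos^2\theta_a}{\sin^2(\theta-\theta_a)}.
\]
Its \(d\log\) is \(-2\cot(\theta-\theta_a)\,d\theta=-2\eta_a^{\cot}\), because the factor \(s^2\cos^2\theta_a\) is constant. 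When \(v_a=\infty\) we have \(\theta_a=\pi/2\). In that case \(\Phi^*f_a=\cos^{-2}\theta\), whose \(d\log\) is \(2\tan\theta\,d\theta\). Since \(\cot(\theta-\pi/2)=-\tan\theta\), this again equals \(-2\eta_a^{\cot}\). Both identities are identities of meromorphic forms, so they hold on \(\Theta_N^\circ\).

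\textbf{The integral identity.} Fix truncation parameters. Iterated integrals are functorial for pullback along \(\Phi\), so
\[
\int_{\Phi\circ\gamma'}\kappa_{a_1}\cdots\kappa_{a_r}=(-2)^r\int_{\gamma'}\eta_{a_1}^{\cot}\cdots\eta_{a_r}^{\cot}
\]
for every compact subpath \(\gamma'\). Multilinearity supplies the factor \((-2)^r\). Note that \(\Phi\) is a local biholomorphism near each \(\theta_c\), including \(\theta_c=\pi/2\), where we use the chart \(1/v\). Indeed,
\[
\Phi'(\theta)=\frac{1}{s\cos^2\theta}\neq0 ,
\]
and near \(\pi/2\) the coordinate \(1/v=s\cot\theta\) has nonzero derivative. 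Hence \(d\Phi\) carries the tangent vector \(\vec v_c\) to a nonzero tangent vector at \(v_c\).

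\textbf{Main obstacle: compatibility of the regularizations.} The truncations of \(\gamma\) use the local parameter \(t_c\) normalized by \(\vec v_c\), while those of \(\gamma_v\) use a parameter \(u_c\) on \(\mathbb P^1\) normalized by \(d\Phi(\vec v_c)\). By construction, \(\Phi^*u_c=t_c+O(t_c^2)\). So the two truncation points differ only by the factor \(1+O(\varepsilon)\) at each end.

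To handle this, I would invoke the standard fact (Deligne) that, for logarithmic forms, the truncated integral is a polynomial in \(\log\varepsilon\) plus \(O(\varepsilon\log^k\varepsilon)\). Moreover, the constant term depends only on the tangent vector and not on the choice of local parameter with that first-order term.

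Concretely, the shuffle regularization writes the truncated integral as
\[
\sum_j \frac{\log^j\varepsilon}{j!}\cdot(\text{regularized pieces}).
\]
The pieces near the endpoints are integrals over the short segment between the two truncation points. These are \(O(\varepsilon\,\mathrm{polylog}\,\varepsilon)\), since the forms have at most simple poles and the segment has length \(O(\varepsilon^2)\) in the \(t_c\) variable. Taking constant terms on both sides then gives the stated equality
\[
\operatorname{reg}\int_{\gamma_v}\kappa_{a_1}\cdots\kappa_{a_r}=(-2)^r\,C_\gamma^{\reg}(a_0;\,a_1,\ldots,a_r;\,b_0).
\]
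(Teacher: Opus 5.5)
Your proposal is correct and follows the paper's route: the same direct computation of \(\Phi^*f_a\) in the cases \(v_a\neq\infty\) and \(v_a=\infty\), followed by functoriality of iterated integrals under pullback, multilinearity, and compatibility of tangential regularization with \(\Phi\). Your treatment of the last step is more detailed than the paper's one-line assertion. You match the local parameters to first order via \(\Phi^*u_c=t_c+O(t_c^2)\) and bound the short end segments by \(O(\varepsilon\log^k\varepsilon)\).
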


\begin{proof}
Suppose first that \(v_a\neq\infty\).  From
\[
 v=\frac{\sqrt{-1}}{\delta_N}\tan\theta,
 \qquad
 v_a=\frac{\sqrt{-1}}{\delta_N}\tan\theta_a,
 \qquad
 \lambda_N=-\delta_N^2,
\]
we obtain
\[
 1+\lambda_Nv^2
 =
 1+\tan^2\theta
 =
 \frac{1}{\cos^2\theta}
\]
and
\[
 v-v_a
 =
 \frac{\sqrt{-1}}{\delta_N}
 \bigl(\tan\theta-\tan\theta_a\bigr).
\]
Since
\[
 \tan\theta-\tan\theta_a
 =
 \frac{\sin(\theta-\theta_a)}
 {\cos\theta\cos\theta_a},
\]
it follows that
\[
 \frac{1+\lambda_Nv^2}{(v-v_a)^2}
 =
 \lambda_N
 \frac{\cos^2\theta_a}
 {\sin^2(\theta-\theta_a)}.
\]
The factor \(\lambda_N\cos^2\theta_a\) is independent of \(\theta\).
Therefore
\[
\begin{aligned}
 \kappa_a
 =
 d\log
 \frac{1+\lambda_Nv^2}{(v-v_a)^2} 
 =
 -2\,d\log\sin(\theta-\theta_a) 
 =
 -2\cot(\theta-\theta_a)\,d\theta
 =
 -2\,\eta_a^{\cot}.
\end{aligned}
\]

Now suppose that \(v_a=\infty\).  Then \(N\) is even and
\(\theta_a=\pi/2\pmod{\pi}\).  Since
\[
 1+\lambda_Nv^2
 =
 1+\tan^2\theta
 =
 \sec^2\theta,
\]
we have
\[
 \kappa_a
 =
 d\log(\sec^2\theta)
 =
 2\tan\theta\,d\theta.
\]
On the other hand,
\[
 \cot\left(\theta-\frac{\pi}{2}\right)
 =
 -\tan\theta.
\]
Hence
\[
 \kappa_a
 =
 -2\cot(\theta-\theta_a)\,d\theta
 =
 -2\,\eta_a^{\cot}
\]
also in this case.

The identity for iterated integrals follows from the pullback formula,
multilinearity, and compatibility of tangential regularization with the
change of variables.
\end{proof}

\subsection{The cotangent punctured line}
\label{subsec:cotangent-punctured-line}
The preceding coordinate identifies the natural geometric domain of the
cotangent differential forms: a punctured projective line over \(F\) with one
nonsplit quadratic boundary divisor.

The polynomial
\[
 1+\lambda_Nv^2
\]
defines a reduced degree-two divisor on \(\mathbb P^1_F\).  We denote this
divisor by
\[
 D_N^{\mathrm{Cay}}
 :=
 V(1+\lambda_Nv^2).
\]
It is important that \(D_N^{\mathrm{Cay}}\) is defined over \(F\), although
its two geometric points are not individually \(F\)-rational.  Indeed, after
base change from \(F\) to \(E\), one has
\[
 (D_N^{\mathrm{Cay}})_E
 =
 \{\delta_N^{-1},-\delta_N^{-1}\}.
\]
Thus \(D_N^{\mathrm{Cay}}\) is a closed point of degree two over \(F\),
isomorphic to \(\operatorname{Spec}E\).

Define the reduced divisor
\[
 B_N^{\cot}
 :=
 \sum_{a\in\mathbb Z/N\mathbb Z}[v_a]
 +
 D_N^{\mathrm{Cay}}
\]
on \(\mathbb P^1_F\).

\begin{defn}
\label{def:cotangent-punctured-line}
The \textit{cotangent punctured line of level \(N\)} is the open curve
\[
 X_N^{\cot}
 :=
 \mathbb P^1_F\setminus\operatorname{Supp}(B_N^{\cot}).
\]
Its boundary consists of the \(F\)-rational cotangent points \(v_a\) and
the degree-two Cayley boundary divisor \(D_N^{\mathrm{Cay}}\).
\end{defn}

The map \(\Phi\) restricts to an analytic isomorphism
\(
 \Phi:\Theta_N^\circ\xrightarrow{\sim}X_N^{\cot}(\mathbb C)
\)
by \eqref{eq:theta-to-v-map}.

\begin{prop}
\label{prop:cotangent-forms-over-F}
For every \(a\in\mathbb Z/N\mathbb Z\), the form \(\kappa_a\) is an
\(F\)-rational logarithmic differential form on \(X_N^{\cot}\).
More precisely, if
\[
 f_a(v)=
 \begin{cases}
 \dfrac{1+\lambda_Nv^2}{(v-v_a)^2},
   & v_a\neq\infty,\\[8pt]
 1+\lambda_Nv^2,
   & v_a=\infty,
 \end{cases}
\]
then
\[
 \kappa_a=d\log f_a(v)
\]
and
\[
 \operatorname{div}(f_a)
 =
 D_N^{\mathrm{Cay}}-2[v_a].
\]
Thus \(\kappa_a\) has residue \(1\) at each of the two geometric Cayley
boundary points and residue \(-2\) at \(v_a\).
\end{prop}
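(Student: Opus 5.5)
The plan is to work directly with the explicit rational function \(f_a\) on \(\mathbb P^1_F\): compute its divisor, which also gives the residues of \(\kappa_a=d\log f_a\). Rationality over \(F\) is immediate, since \(\lambda_N\in F\) and \(v_a\in F\) when \(v_a\neq\infty\). Hence \(f_a\in F(v)^\times\) and \(\kappa_a=d\log f_a\) is a meromorphic \(1\)-form on \(\mathbb P^1_F\) defined over \(F\). It has at most simple poles, because \(d\log\) of any rational function has only simple poles, with residue equal to the order of vanishing. Thus everything reduces to computing \(\operatorname{div}(f_a)\) and checking that its support lies in \(\operatorname{Supp}(B_N^{\cot})\).

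First take \(v_a\neq\infty\). The numerator \(1+\lambda_Nv^2\) is irreducible over \(F\), because \(-\lambda_N^{-1}=\delta_N^{-2}\) is not a square in \(F\): we have \(\delta_N\notin F\), since \(\overline{\delta_N}=-\delta_N\neq0\). So the numerator cuts out exactly the reduced degree-two point \(D_N^{\mathrm{Cay}}\), and over \(E\) it factors as \(\lambda_N(v-\delta_N^{-1})(v+\delta_N^{-1})\) with two distinct simple zeros. The denominator \((v-v_a)^2\) contributes \(-2[v_a]\). We must also check that \(v_a\) is not a Cayley point, i.e.\ \(1+\lambda_Nv_a^2\neq0\). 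This holds because \(v_a\in F\subset\mathbb R\) and \(\lambda_N>0\). At \(\infty\), numerator and denominator both have degree \(2\), so \(f_a\) has neither zero nor pole there (its value is \(\lambda_N\)). Hence \(\operatorname{div}(f_a)=D_N^{\mathrm{Cay}}-2[v_a]\).

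In the case \(v_a=\infty\), \(f_a=1+\lambda_Nv^2\) has the same zeros at the Cayley points and a pole of order \(2\) at \(\infty=v_a\). This again gives \(D_N^{\mathrm{Cay}}-2[v_a]\).

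Finally, \(\operatorname{Res}_P\,d\log f=\operatorname{ord}_P f\) at each geometric point. So the residues of \(\kappa_a\) are \(1\) at \(\pm\delta_N^{-1}\) and \(-2\) at \(v_a\), and \(\kappa_a\) is holomorphic elsewhere. In particular, \(\kappa_a\) is a logarithmic form on \(X_N^{\cot}\), whose poles lie in \(\operatorname{Supp}(B_N^{\cot})\). The only point needing care is the nonsplitness and disjointness check, namely that \(v_a\) avoids the Cayley divisor and that the Cayley zeros are simple. Both follow from the positivity of \(\lambda_N\) and the relation \(E=F(\delta_N)\) stated earlier.
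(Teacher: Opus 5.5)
Your proposal is correct and follows the same route as the paper: show \(f_a\in F(v)^\times\), compute \(\operatorname{div}(f_a)=D_N^{\mathrm{Cay}}-2[v_a]\), and read off the residues of \(\kappa_a=d\log f_a\) as orders of vanishing. You also check, correctly, several points the paper's proof leaves implicit: that \(1+\lambda_Nv^2\) is irreducible over \(F\) with two simple geometric zeros, that \(v_a\) avoids the Cayley divisor, and that \(f_a\) has neither a zero nor a pole at \(\infty\) when \(v_a\) is finite.
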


\begin{proof}
Both \(v_a\) and \(\lambda_N\) belong to \(F\), so
\(
 f_a(v)\in F(v)^\times.
\)
Its zero divisor is \(D_N^{\mathrm{Cay}}\), while its pole divisor is
\(2[v_a]\), including the case \(v_a=\infty\).  Hence
\[
 \operatorname{div}(f_a)
 =
 D_N^{\mathrm{Cay}}-2[v_a].
\]
The asserted properties follow by taking the logarithmic differential.
\end{proof}

Proposition~\ref{thm:algebraic-realization-cotangent-kernel} shows that
iterated cotangent integrals are, up to the factor \((-2)^r\), iterated
integrals of the \(F\)-rational logarithmic forms
\[
 \kappa_a\in
 H^0\left(
  X_N^{\cot},
  \Omega^1_{\mathbb P^1_F}(\log B_N^{\cot})
 \right).
\]
In this sense, iterated cotangent integrals are periods naturally associated
with the cotangent punctured line.

\begin{rem}
The fact that \(X_N^{\cot}\) and the forms \(\kappa_a\) are defined over \(F\) does not, by itself, imply that every period of \(X_N^{\cot}\) is a mixed Tate period over \(F\).  The nonsplit divisor \(D_N^{\mathrm{Cay}}\) may contribute an Artin--Tate direction. However it splits into its two Cayley boundary points after extending scalars from \(F\) to \(E\). The descent construction studied below select particular families of periods for which a mixed Tate interpretation over a suitable ring of integers of \(F\) can be established. 
\end{rem}

\section{Cyclotomic realization}
\label{sec:cyclotomic-realization}
In this section, we will use the Cayley transformation
\[
 z=\frac{1+\delta_Nv}{1-\delta_Nv}
\]
to identify the resulting punctured line with the standard cyclotomic
punctured projective line and to express iterated cotangent integrals
explicitly in terms of cyclotomic iterated integrals. In fact,
after base change from \(F\) to the cyclotomic field \(E\), the cotangent punctured line becomes the standard cyclotomic punctured line. We will use this identification to express iterated cotangent integrals as cyclotomic iterated integrals and, for specified reference paths and tangential base points, as cyclotomic multiple zeta values.

\subsection{The Cayley transformation}
\label{subsec:cyclotomic-cayley-transformation}

We first identify the cotangent punctured line over \(E\) and compute the images of its logarithmic forms.

Retain the notation of the preceding section, and put
\[
 \xi_a:=\zeta_N^a,
 \qquad
 X_{N,E}^{\mathrm{cyc}}
 :=
 \mathbb P^1_E\setminus
 \{0,\mu_N,\infty\}.
\]
On the \(z\)-line, write
\[
 \omega_0:=\frac{dz}{z},
 \qquad
 \omega_\xi:=\frac{dz}{z-\xi}
 \quad(\xi\in\mu_N),
 \qquad
 h_a:=\omega_{\xi_a}-\frac12\omega_0.
\]

\begin{prop}[Cyclotomic realization of the cotangent punctured line]
\label{prop:cotangent-cyclotomic-isomorphism}
The Cayley transformation
\[
 \Psi(v):=\frac{1+\delta_Nv}{1-\delta_Nv},
 \qquad
 \Psi^{-1}(z)=\frac{z-1}{\delta_N(z+1)},
\]
induces an isomorphism
\[
 \Psi:(X_N^{\cot})_E
 \xrightarrow{\sim}X_{N,E}^{\mathrm{cyc}}.
\]
For every \(a\in\mathbb Z/N\mathbb Z\), one has
\[
 \kappa_a
 =
 \Psi^*(\omega_0-2\omega_{\xi_a})
 =
 -2\Psi^*h_a.
\]
\end{prop}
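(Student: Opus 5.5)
The plan is to treat the two assertions separately: first that \(\Psi\) maps the boundary divisor of \((X_N^{\cot})_E\) bijectively onto \(\{0,\mu_N,\infty\}\), and then that the pullback formula for \(\kappa_a\) holds, by a direct computation with rational functions.

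\emph{The isomorphism.} \(\Psi\) is a Möbius transformation with coefficients in \(E\), since \(\delta_N\in E\). Its determinant is \(2\delta_N\neq0\), so it is an automorphism of \(\mathbb P^1_E\). It therefore suffices to track the boundary points.
\begin{itemize}
\item By construction \(\Psi(v_a)=\zeta_N^a=\xi_a\) for every \(a\), including \(v_{N/2}=\infty\mapsto-1\) when \(N\) is even. This follows because \(\Psi^{-1}(\xi_a)\) is the defining expression for \(v_a\).
\item After base change to \(E\), the Cayley divisor splits as \(\{\delta_N^{-1},-\delta_N^{-1}\}\), and \(\Psi(-\delta_N^{-1})=0\), \(\Psi(\delta_N^{-1})=\infty\).
\end{itemize}
Hence \(\Psi\) carries \(\operatorname{Supp}(B_N^{\cot})_E\) onto \(\{0,\mu_N,\infty\}\). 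Since the \(\xi_a\) are distinct, the \(v_a\) are distinct, and the restriction of \(\Psi\) is an isomorphism of the complements.

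\emph{The pullback formula.} I will compare divisors of rational functions rather than differentiate directly. Set \(g_a(z):=z/(z-\xi_a)^2\), so that \(\omega_0-2\omega_{\xi_a}=d\log g_a\) and \(\operatorname{div}(g_a)=[0]+[\infty]-2[\xi_a]\). The pullback \(\Psi^*g_a\) then has divisor \([-\delta_N^{-1}]+[\delta_N^{-1}]-2[v_a]\) on \(\mathbb P^1_E\). By Proposition~\ref{prop:cotangent-forms-over-F}, this is exactly \(\operatorname{div}(f_a)=D_N^{\mathrm{Cay}}-2[v_a]\), viewed over \(E\). Two rational functions on \(\mathbb P^1\) with the same divisor differ by a nonzero constant, so \(\Psi^*g_a=c\,f_a\) with \(c\in E^\times\). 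Taking \(d\log\) gives
\[
 \Psi^*(\omega_0-2\omega_{\xi_a})=\kappa_a .
\]
The second equality, \(\omega_0-2\omega_{\xi_a}=-2h_a\), is immediate from the definition of \(h_a\).

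\emph{Main obstacle.} The only delicate point is the case \(v_a=\infty\). Here \(f_a=1+\lambda_Nv^2\), and the argument requires that the pole of \(\Psi^*g_a\) at \(v=\infty\) has order exactly \(2\). I would check this in the local coordinate \(1/v\): \(\Psi\) sends \(\infty\) to \(-1=\xi_a\) unramifiedly, since it is an isomorphism, so the double pole of \(g_a\) at \(\xi_a\) pulls back to a double pole at \(\infty\). As a sanity check, the formula is also consistent with Proposition~\ref{thm:algebraic-realization-cotangent-kernel}, because \(\Psi\circ\Phi(\theta)=e^{2\sqrt{-1}\theta}\) and \(h_a\) pulls back to \(\sqrt{-1}(1+\cot(\theta-\theta_a))d\theta-\sqrt{-1}\,d\theta\).
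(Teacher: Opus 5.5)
Your proof is correct. The boundary bookkeeping matches the paper's: determinant $2\delta_N$, $v_a\mapsto\xi_a$, $-\delta_N^{-1}\mapsto 0$ and $\delta_N^{-1}\mapsto\infty$. The difference is in how you prove the pullback identity.

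The paper substitutes $v=\Psi^{-1}(z)$ directly. It obtains $1+\lambda_Nv^2=4z/(z+1)^2$ and $f_a=\delta_N^2(\xi_a+1)^2\,z/(z-\xi_a)^2$, and then repeats the computation separately for $v_a=\infty$. You instead observe that $\Psi^*g_a$ and $f_a$ have the same divisor on $\mathbb P^1_E$. This uses Proposition~\ref{prop:cotangent-forms-over-F} and the fact that an automorphism pulls divisors back with multiplicities preserved. So their ratio is a constant in $E^\times$, which disappears under $d\log$.

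Your route needs no computation and treats $v_a=\infty$ uniformly. For the same reason, the ``obstacle'' you flag is automatic rather than delicate. The paper's computation gives the explicit constant $\delta_N^2(\xi_a+1)^2$, i.e.\ an identity of functions and not just of forms. That is more than the statement requires.

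One slip in your closing sanity check. Under $z=e^{2\sqrt{-1}\theta}$, the form $\omega_{\xi_a}$ pulls back to $\bigl(\cot(\theta-\theta_a)+\sqrt{-1}\bigr)d\theta$, not $\sqrt{-1}\bigl(1+\cot(\theta-\theta_a)\bigr)d\theta$. Your expression would give $(\Psi\circ\Phi)^*h_a=\sqrt{-1}\,\eta_a^{\cot}$, which contradicts $\Phi^*\kappa_a=-2\eta_a^{\cot}$. The corrected expression gives $(\Psi\circ\Phi)^*h_a=\eta_a^{\cot}$, as in the paper's remark. This does not affect the proof itself.
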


\begin{proof}
The transformation \(\Psi\) sends
\[
 v_a\longmapsto\xi_a,
 \qquad
 -\delta_N^{-1}\longmapsto0,
 \qquad
 \delta_N^{-1}\longmapsto\infty.
\]
When \(v_a=\infty\), its image is \(-1=\xi_a\). Thus \(\Psi\) identifies the boundary divisors and induces the stated isomorphism.

Suppose first that \(v_a\neq\infty\). Since \(\lambda_N=-\delta_N^2\), substitution gives
\[
 1+\lambda_Nv^2=\frac{4z}{(z+1)^2},
 \qquad
 v-v_a
 =
 \frac{2(z-\xi_a)}
 {\delta_N(z+1)(\xi_a+1)}.
\]
Consequently,
\[
 \frac{1+\lambda_Nv^2}{(v-v_a)^2}
 =
 \delta_N^2(\xi_a+1)^2
 \frac{z}{(z-\xi_a)^2}.
\]
Taking logarithmic differentials yields
\[
 \kappa_a
 =
 \Psi^*\bigl(d\log z-2d\log(z-\xi_a)\bigr).
\]
If \(v_a=\infty\), then \(\xi_a=-1\), and
\[
 \kappa_a
 =
 d\log(1+\lambda_Nv^2)
 =
 \Psi^*\left(d\log\frac{4z}{(z+1)^2}\right).
\]
This gives the same formula.
\end{proof}

\begin{rem}
The composition of \(\Phi\) from \eqref{eq:theta-to-v-map} with \(\Psi\) is
\[
 (\Psi\circ\Phi)(\theta)=e^{2\sqrt{-1}\theta}.
\]
It follows that
\[
 (\Psi\circ\Phi)^*h_a=\eta_a^{\cot}.
\]
Thus the analytic cotangent differential form corresponds to the cyclotomic logarithmic form \(h_a\).
\end{rem}

\subsection{Expansion into cyclotomic iterated integrals}
\label{subsec:cotangent-cyclotomic-expansion}
The preceding identity gives an explicit expansion along any admissible path, provided that its endpoint tangent vectors are transported by the coordinate changes.

Let
\[
 \gamma:\theta_{a_0}\rightsquigarrow\theta_{b_0}
\]
be a tangential path, and denote its images by
\[
 \gamma_v:=\Phi\circ\gamma:
 v_{a_0}\rightsquigarrow v_{b_0},
 \qquad
 \gamma_z:=\Psi\circ\gamma_v:
 \xi_{a_0}\rightsquigarrow\xi_{b_0}.
\]
The tangential base points of these image paths are obtained by applying the differentials of the corresponding maps.

For a tangential path \(\rho\) on \(X_{N,E}^{\mathrm{cyc}}(\mathbb C)\), write
\[
 I_\rho(\nu_1,\ldots,\nu_r)
 :=
 \operatorname{reg}\int_\rho
 \omega_{\nu_1}\cdots\omega_{\nu_r},
 \qquad
 \nu_j\in\{0\}\cup\mu_N.
\]
Our ordering convention is that the first letter corresponds to the earliest integration variable. For \(a\in\mathbb Z/N\mathbb Z\) and \(\varepsilon\in\{0,1\}\), put
\[
 \xi(a,\varepsilon)
 :=
 \begin{cases}
  0,&\varepsilon=0,\\
  \xi_a,&\varepsilon=1.
 \end{cases}
\]

\begin{prop}[Cyclotomic expansion of iterated cotangent integrals]
\label{prop:iterated-cotangent-cyclotomic-expansion}
With the paths and tangential data specified above, one has
\[
\begin{aligned}
 C_\gamma^{\reg}(a_0;\,a_1,\ldots,a_r;\,b_0)
 ={}&
 \sum_{\varepsilon_1,\ldots,\varepsilon_r\in\{0,1\}}
 \left(-\frac12\right)^{\#\{j:\varepsilon_j=0\}}
 I_{\gamma_z}\bigl(
 \xi(a_1,\varepsilon_1),\ldots,\xi(a_r,\varepsilon_r)
 \bigr).
\end{aligned}
\]
Equivalently,
\[
\begin{aligned}
 \operatorname{reg}\int_{\gamma_v}
 \kappa_{a_1}\cdots\kappa_{a_r}
 =
 \sum_{\varepsilon_1,\ldots,\varepsilon_r\in\{0,1\}}
 (-2)^{\#\{j:\varepsilon_j=1\}}
 I_{\gamma_z}\bigl(
 \xi(a_1,\varepsilon_1),\ldots,\xi(a_r,\varepsilon_r)
 \bigr).
\end{aligned}
\]
Every iterated integral on the right has length \(r\).
\end{prop}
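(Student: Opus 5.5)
The plan is to pull the whole word of cotangent forms back to the $z$-line via the composite $\Psi\circ\Phi$ and expand multilinearly. By the Remark following Proposition~\ref{prop:cotangent-cyclotomic-isomorphism}, $(\Psi\circ\Phi)^*h_a=\eta_a^{\cot}$ with $h_a=\omega_{\xi_a}-\tfrac12\omega_0$. Since $\Psi\circ\Phi(\theta)=e^{2\sqrt{-1}\theta}$ is a local analytic isomorphism near every point of $\Theta$, iterated integrals along $\gamma$ of pulled-back forms equal iterated integrals along $\gamma_z=\Psi\circ\Phi\circ\gamma$ of the original forms. So the first step is to write
\[
 C_\gamma^{\reg}(a_0;\,a_1,\ldots,a_r;\,b_0)
 =\operatorname{reg}\int_{\gamma_z} h_{a_1}\cdots h_{a_r}.
\]
The tangential data on $\gamma_z$ are the pushforwards of those on $\gamma$ under $d(\Psi\circ\Phi)$, as specified before the statement.

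Second, I will expand each $h_{a_j}=\omega_{\xi(a_j,1)}-\tfrac12\omega_{\xi(a_j,0)}$. Iterated integrals are multilinear in the letters, both before and after truncation, and taking the constant term in $\log\varepsilon_0,\log\varepsilon_1$ is linear. The regularized integral is therefore the sum over $\varepsilon\in\{0,1\}^r$ of $(-\tfrac12)^{\#\{j:\varepsilon_j=0\}}I_{\gamma_z}(\xi(a_1,\varepsilon_1),\ldots,\xi(a_r,\varepsilon_r))$, which is the first formula. The second formula follows by multiplying by $(-2)^r$ and using Proposition~\ref{thm:algebraic-realization-cotangent-kernel}, since $(-2)^r(-\tfrac12)^{k}=(-2)^{r-k}$ where $k$ counts the zeros. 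Every term is a word of exactly $r$ letters, which gives the length claim.

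The main obstacle is the compatibility of tangential regularization with the change of variables, which justifies the first identity. The truncation $\gamma_{\varepsilon_0,\varepsilon_1}$ is defined using the normalized local parameter $t_c$ attached to $\vec v_c$. Under the coordinate $z$, the corresponding parameter $t'$ attached to $d(\Psi\circ\Phi)(\vec v_c)$ satisfies $t'=t_c(1+O(t_c))$, because the map is a local biholomorphism sending the chosen vector to the chosen vector. I will argue in two steps.
\begin{enumerate}
\item The truncated integrals, as functions of the cutoff, have asymptotic expansions of the form polynomial in $\log\varepsilon$ plus $o(1)$.
\item Replacing the cutoff $\varepsilon$ by $\varepsilon(1+O(\varepsilon))$ changes each $\log\varepsilon$ by $O(\varepsilon)$. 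Since the coefficients of the polynomial are bounded, the constant term is unchanged.
\end{enumerate}
Equivalently, one can invoke the intrinsic characterization of Deligne's regularization: it depends only on the tangent vector, and it is functorial under isomorphisms of pointed curves. The same argument also justifies the last sentence of the proof of Proposition~\ref{thm:algebraic-realization-cotangent-kernel}.
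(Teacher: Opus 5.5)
Your proposal is correct and follows the paper's proof. You pull back $h_{a_1}\cdots h_{a_r}$ along $\theta\mapsto e^{2\sqrt{-1}\theta}$, using compatibility of tangential regularization with the change of variables, and then expand multilinearly. The differences are minor: you get the second formula from the first via Proposition~\ref{thm:algebraic-realization-cotangent-kernel} rather than expanding $\kappa_a=\Psi^*(\omega_0-2\omega_{\xi_a})$ directly, and you sketch the regularization-compatibility step that the paper simply cites as standard.
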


\begin{proof}
By Proposition~\ref{prop:cotangent-cyclotomic-isomorphism} and compatibility of tangential regularization with pullback,
\[
\begin{aligned}
 C_\gamma^{\reg}(a_0;\,a_1,\ldots,a_r;\,b_0)
 =
 \operatorname{reg}\int_{\gamma_z}
 h_{a_1}\cdots h_{a_r}
 =
 \operatorname{reg}\int_{\gamma_z}
 \left(\omega_{\xi_{a_1}}-\frac12\omega_0\right)
 \cdots
 \left(\omega_{\xi_{a_r}}-\frac12\omega_0\right).
\end{aligned}
\]
Multilinearity gives the first formula. The second follows by expanding the corresponding product of the forms \(\omega_0-2\omega_{\xi_{a_j}}\).
\end{proof}

\subsection{Cyclotomic multiple zeta values}
\label{subsec:cotangent-cyclotomic-mzvs}
In this subsection, we introduce the cyclotomic multiple zeta values
arising from iterated integrals on the cyclotomic punctured line. 
The multiple
polylogarithms below are ordinary multivalued functions; the adjective
``cyclotomic'' refers to their regularized special values at roots of
unity.

For positive integers \(k_1,\ldots,k_s\), define 
the \textit{shuffle-type multiple polylogarithm} by
\[
\operatorname{Li}^{\shuffle}_{k_1,\ldots,k_s}
 (x_1,\ldots,x_s)
 :=
 \sum_{0<n_1<\cdots<n_s}
 \frac{
  x_1^{n_1}
  x_2^{n_2-n_1}
  \cdots
  x_s^{n_s-n_{s-1}}
 }{
  n_1^{k_1}\cdots n_s^{k_s}
 }.
\]
When $s=1$, we simply denote by $\operatorname{Li}_{k_1} (x_1)$.
The series is initially considered in its domain of absolute convergence.
Its values outside this domain are defined by analytic continuation.
At singular cyclotomic points, we use shuffle regularization with respect
to the standard tangential base points.

A regularized value
\[
 \operatorname{Li}^{\shuffle}_{k_1,\ldots,k_s}
 (\varepsilon_1,\ldots,\varepsilon_s),
 \qquad
 \varepsilon_1,\ldots,\varepsilon_s\in\mu_N,
\]
is called a \textit{cyclotomic multiple zeta value of level \(N\)} and
weight
\[
 k_1+\cdots+k_s.
\]
Let \(\mathcal Z_{N,r}\) denote the \(\mathbb Q\)-linear span
of regularized level-\(N\) cyclotomic multiple zeta values
of weight \(r\).

For each \(c \in \mathbb Z/N\mathbb Z\), we put
$\xi_c:=\zeta_N^c$
and let
\[
 \ell_c:(0,1]\longrightarrow\mathbb P^1_z,
 \qquad
 \ell_c(t)=t\xi_c,
\]
be the \textit{radial path} from the tangential base point at \(0\) in the direction
\(\xi_c\partial_z\) to the tangential base point at \(\xi_c\) with tangent
vector \(-\xi_c\partial_z\).

Recall that
\[
 h_a:=\omega_{\xi_a}-\frac12\omega_0,
 \qquad
 \omega_\xi:=\frac{dz}{z-\xi},
 \qquad
 \omega_0:=\frac{dz}{z}.
\]
\begin{defn}
For a sequence \(a_1,\ldots,a_r \in \mathbb Z/N \mathbb Z\), define
the \textit{radial \(h\)-integral} by
\begin{equation}\label{eq: radial h-integral}
     \mathcal H_c(a_1,\ldots,a_r)
 :=
 \operatorname{reg}\int_{\ell_c}
 h_{a_1}\cdots h_{a_r},
 \qquad
 \mathcal H_c(\varnothing):=1.
\end{equation}
\end{defn}

Under the change of variables
\(
 z=\xi_c t,
\)
the path \(\ell_c\) becomes the standard path from \(0\) to \(1\).
For \(\nu_1,\ldots,\nu_s\in\mu_N\), one has
\[
\begin{aligned}
&\operatorname{reg}\int_{\ell_c}
 \omega_{\nu_1}\omega_0^{k_1-1}
 \cdots
 \omega_{\nu_s}\omega_0^{k_s-1}
=
(-1)^s
\operatorname{Li}^{\shuffle}_{k_1,\ldots,k_s}
 \left(
  \frac{\xi_c}{\nu_1},
  \ldots,
  \frac{\xi_c}{\nu_s}
 \right).
\end{aligned}
\]
Here a power of a differential form denotes a repeated letter in the
iterated integral.

Expanding
\[
 h_{a_j}=\omega_{\xi_{a_j}}-\frac12\omega_0
\]
and applying shuffle regularization to words beginning with
\(\omega_0\), we obtain
\[
 \mathcal H_c(a_1,\ldots,a_r)\in\mathcal Z_{N,r}.
\]

\begin{eg}
We let
\[
 x_j:=\frac{\xi_c}{\xi_{a_j}}.
\]
Then
\[
 \mathcal H_c(a_1)
 =
 -\operatorname{Li}_1(x_1),
\]
and
\[
\begin{aligned}
 \mathcal H_c(a_1,a_2)
 ={}&
 \operatorname{Li}^{\shuffle}_{1,1}(x_1,x_2)
 +\frac12\operatorname{Li}_2(x_1)
 -\frac12\operatorname{Li}_2(x_2).
\end{aligned}
\]
In length three, one has
\[
\begin{aligned}
 \mathcal H_c(a_1,a_2,a_3)
& =
 -\operatorname{Li}^{\shuffle}_{1,1,1}(x_1,x_2,x_3)
+\frac12\operatorname{Li}^{\shuffle}_{2,1}(x_2,x_3)
 +\frac12\operatorname{Li}^{\shuffle}_{1,2}(x_2,x_3)
\\
\ &-\frac12\operatorname{Li}^{\shuffle}_{2,1}(x_1,x_3)
 -\frac12\operatorname{Li}^{\shuffle}_{1,2}(x_1,x_2)
-\frac14\operatorname{Li}_3(x_1)
 +\frac12\operatorname{Li}_3(x_2)
 -\frac14\operatorname{Li}_3(x_3).
\end{aligned}
\]
Thus every radial \(h\)-integral is an explicit level-\(N\)
cyclotomic multiple zeta value.  These radial values will be used below
as building blocks for integrals between two cyclotomic punctures.
\end{eg}
\subsection{Reference paths between cyclotomic points}
\label{subsec:cotangent-reference-paths}

We construct reference paths between cyclotomic points and express the
corresponding iterated cotangent integrals in terms of radial
\(h\)-integrals. Integer indices will record the choices of logarithmic
lifts, while the cotangent integrals  labels remain in \(\mathbb Z/N\mathbb Z\).

Recall that \(N\geq3\) and we write \(\xi_m:=\zeta_N^m\) for \(m\in\mathbb Z\); thus \(\xi_m\) depends only on \(m\bmod N\).
Words in iterated integrals are ordered by increasing path parameter,
and path products are traversed from left to right.

\subsubsection{Reference paths and integer indices}

We use integer indices to specify tangential rotations without
introducing a separate notation for each integer lift.

For \(m\in\mathbb Z\), let
\[
 \ell_m(t):=t\xi_m,
 \qquad 0<t<1,
\]
be the radial path with tangential endpoints
\[
 \ell_m:
 (0,\xi_m\partial_z)
 \rightsquigarrow
 (\xi_m,-\xi_m\partial_z).
\]
Define the tangential rotation
\[
 \tau_m(s)
 :=
 \exp\left(\frac{2\pi\sqrt{-1}\,ms}{N}\right)\partial_z,
 \qquad 0\leq s\leq1,
\]
and put
\[
 p_m:=\tau_m*\ell_m:
 (0,\partial_z)
 \rightsquigarrow
 (\xi_m,-\xi_m\partial_z).
\]
Although \(\ell_m\) depends only on \(m\bmod N\), the homotopy
classes of \(\tau_m\) and \(p_m\) depend on the integer \(m\).
Replacing \(m\) by \(m+Nk\) adds \(k\) full turns to the
tangential rotation.

Let \(a_0,b_0\in\mathbb Z/N\mathbb Z\), and choose integers \(m,n\)
such that
\[
 m\bmod N=a_0,
 \qquad
 n\bmod N=b_0.
\]
Define
\[
 \rho_{m,n}:=p_m^{-1}*p_n:
 (\xi_{a_0},-\xi_{a_0}\partial_z)
 \rightsquigarrow
 (\xi_{b_0},-\xi_{b_0}\partial_z).
\]
This composition is taken in the tangential fundamental groupoid of
\[
 \mathbb P^1(\mathbb C)\setminus\{0,\mu_N,\infty\}.
\]
It can be represented by travelling inward along the first radial
path, following a sufficiently small arc around \(0\) with the
prescribed winding, and then travelling outward along the second
radial path. Such a representative does not pass through \(0\).

Let \(\gamma_{m,n}\) be the corresponding path in the angular
cylinder under
\[
 e(\theta):=e^{2\sqrt{-1}\theta}.
\]
Its lift to \(\mathbb C\) starting at \(\pi m/N\) ends at
\(\pi n/N\). At both endpoints, the transported tangent vector is
\[
 -\frac{1}{2\sqrt{-1}}\partial_\theta
 =\frac{\sqrt{-1}}{2}\partial_\theta,
\]
because its image under \(de\) is \(-\xi_c\partial_z\) at
\(\theta_c\).

For a word \(a_1,\ldots,a_r\in\mathbb Z/N\mathbb Z\), write
\[
 \mathcal H_m(a_1,\ldots,a_r)
 :=
 \operatorname{reg}\int_{\ell_m}h_{a_1}\cdots h_{a_r},
 \qquad
 \mathcal H_m(\varnothing):=1.
\]
This is the radial \(h\)-integral of
\eqref{eq: radial h-integral}, indexed here by an integer \(m\);
in particular, \(\mathcal H_{m+N}=\mathcal H_m\).
Define the \textit{framed radial \(h\)-integral} by
\[
 \mathcal H_m^{\mathrm{fr}}(a_1,\ldots,a_r)
 :=
 \operatorname{reg}\int_{p_m}h_{a_1}\cdots h_{a_r},
 \qquad
 \mathcal H_m^{\mathrm{fr}}(\varnothing):=1.
\]

Every \(h_a\) has residue \(-1/2\) at \(0\). On the punctured
tangent line, it is therefore represented by
\(-\frac12\,dq/q\), where \(q\) is the tangent coordinate relative
to \(\partial_z\). Hence
\[
 \operatorname{reg}\int_{\tau_m}h_{a_1}\cdots h_{a_j}
 =
 \frac{1}{j!}
 \left(-\frac{\pi\sqrt{-1}\,m}{N}\right)^j.
\]
Chen's path-composition formula gives
\begin{equation}
\label{eq:framed-radial-h-integral}
 \mathcal H_m^{\mathrm{fr}}(a_1,\ldots,a_r)
 =
 \sum_{j=0}^{r}
 \frac{1}{j!}
 \left(-\frac{\pi\sqrt{-1}\,m}{N}\right)^j
 \mathcal H_m(a_{j+1},\ldots,a_r).
\end{equation}
Throughout this subsection, every empty word has value \(1\).

\subsubsection{The endpoint formula}
Path composition and reversal express the integral between two
cyclotomic endpoints as a finite sum of products of framed radial
\(h\)-integrals.

\begin{thm}[Cyclotomic endpoint formula]
\label{thm:cyclotomic-endpoint-formula}
Let \(m,n\in\mathbb Z\) represent \(a_0,b_0\in\mathbb Z/N\mathbb Z\),
respectively. For the reference path \(\gamma_{m,n}\), with the
tangential data specified above, one has
\[
\begin{aligned}
 &C_{\gamma_{m,n}}^{\reg}
 (a_0;\,a_1,\ldots,a_r;\,b_0)
 =
 \sum_{j=0}^{r}(-1)^j
 \mathcal H_m^{\mathrm{fr}}(a_j,\ldots,a_1)
 \mathcal H_n^{\mathrm{fr}}(a_{j+1},\ldots,a_r).
\end{aligned}
\]
In particular,
\[
 C_{\gamma_{m,n}}^{\reg}
 (a_0;\,a_1,\ldots,a_r;\,b_0)
 \in\mathcal Z_{N,r}.
\]
\end{thm}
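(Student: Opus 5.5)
The formula follows from three facts: iterated integrals are invariant under the change of variables $e(\theta)=e^{2\sqrt{-1}\theta}$, Chen's path-composition formula holds, and path reversal gives the sign $(-1)^j$ with reversed words.

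\emph{Transfer to the $z$-line.} By the remark following Proposition~\ref{prop:cotangent-cyclotomic-isomorphism}, $(\Psi\circ\Phi)^*h_a=\eta_a^{\cot}$. Here $\Psi\circ\Phi=e$, and by construction $\gamma_{m,n}$ is the path on $\Theta$ corresponding to $\rho_{m,n}$, with endpoint tangent vectors $\frac{\sqrt{-1}}{2}\partial_\theta$ mapping to $-\xi_c\partial_z$. Tangential regularization is compatible with this local biholomorphism, since the normalized local parameters correspond up to $O(\varepsilon^2)$ corrections that do not affect constant terms. Exactly as in the proof of Proposition~\ref{prop:iterated-cotangent-cyclotomic-expansion}, we therefore get
\[
C_{\gamma_{m,n}}^{\reg}(a_0;\,a_1,\ldots,a_r;\,b_0)=\operatorname{reg}\int_{\rho_{m,n}}h_{a_1}\cdots h_{a_r}.
\]

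\emph{Decomposition.} Since $\rho_{m,n}=p_m^{-1}*p_n$ in the tangential fundamental groupoid, Chen's composition formula, which is valid for regularized integrals on tangential paths, gives
\[
\operatorname{reg}\int_{\rho_{m,n}}h_{a_1}\cdots h_{a_r}=\sum_{j=0}^r\operatorname{reg}\int_{p_m^{-1}}h_{a_1}\cdots h_{a_j}\cdot\operatorname{reg}\int_{p_n}h_{a_{j+1}}\cdots h_{a_r}.
\]
The second factor is $\mathcal H_n^{\mathrm{fr}}(a_{j+1},\ldots,a_r)$ by definition. For the first factor, the reversal formula $\int_{\alpha^{-1}}\omega_1\cdots\omega_j=(-1)^j\int_\alpha\omega_j\cdots\omega_1$ holds also after regularization, because reversal just swaps the roles of $\varepsilon_0$ and $\varepsilon_1$ in the truncation. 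It yields $(-1)^j\mathcal H_m^{\mathrm{fr}}(a_j,\ldots,a_1)$, which is the stated formula.

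\emph{Membership in $\mathcal Z_{N,r}$.} Expand each framed factor by \eqref{eq:framed-radial-h-integral}. This writes the right-hand side as a sum of terms
\[
\frac{(-1)^{i+k}}{i!\,k!}\left(\frac{\pi\sqrt{-1}}{N}\right)^{i+k}m^i n^k\,\mathcal H_m(\cdots)\,\mathcal H_n(\cdots),
\]
with the two radial words of total length $r-i-k$. We know that $\mathcal H_c(\cdots)$ of length $\ell$ lies in $\mathcal Z_{N,\ell}$. In addition, $2\pi\sqrt{-1}/N=-\operatorname{Li}_1$-type difference, for instance $2\pi\sqrt{-1}/N=\log\zeta_N$, can be written as a rational combination of weight-one regularized values. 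Concretely, $\operatorname{reg}\int_{\ell_0}\omega_{\xi}$ and its variants give $\log(1-\xi)$, and $\log(1-\zeta_N)-\log(1-\zeta_N^{-1})=\pi\sqrt{-1}(1/N-1)\cdot(-1)$, i.e.\ $\pi\sqrt{-1}(N-2)/N$ up to sign, which is a rational multiple of $\pi\sqrt{-1}$ when $N\ge3$. Since $\mathcal Z_N=\bigoplus_r\mathcal Z_{N,r}$-products satisfy $\mathcal Z_{N,p}\mathcal Z_{N,q}\subset\mathcal Z_{N,p+q}$ by the shuffle product, every term lies in $\mathcal Z_{N,r}$.

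\emph{Main obstacle.} The delicate point is the first step. We must check that the tangential data and the homotopy class on $\Theta$ match those of $\rho_{m,n}$ on the $z$-line, including the winding encoded by the integer lifts $m,n$. We must also justify that Chen's formula and reversal hold for paths passing through the tangential base point $(0,\partial_z)$. For the latter, I would rely on Deligne's framework, where the regularized integral is a functor on the tangential groupoid, so that both identities are formal consequences of path composition.
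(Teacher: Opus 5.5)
Your proposal is correct and follows the paper's proof essentially step for step: pull back along $e$ using $e^*h_a=\eta_a^{\cot}$ to reduce to $\operatorname{reg}\int_{\rho_{m,n}}h_{a_1}\cdots h_{a_r}$, apply Chen's formula to $\rho_{m,n}=p_m^{-1}*p_n$ with the reversal sign $(-1)^j$, and deduce membership in $\mathcal Z_{N,r}$ from \eqref{eq:framed-radial-h-integral}, $\pi\sqrt{-1}\in\mathcal Z_{N,1}$, and the shuffle product. One small slip: $\log(1-\zeta_N)-\log(1-\zeta_N^{-1})$ equals $-\frac{N-2}{N}\pi\sqrt{-1}$, not $\pi\sqrt{-1}(1/N-1)\cdot(-1)$; your conclusion that this is a nonzero rational multiple of $\pi\sqrt{-1}$ for $N\geq3$ is still right.
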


\begin{proof}
The pullback identity \(e^*h_a=\eta_a^{\cot}\), together with the
transported tangential data, gives
\[
 C_{\gamma_{m,n}}^{\reg}
 (a_0;\,a_1,\ldots,a_r;\,b_0)
 =
 \operatorname{reg}\int_{\rho_{m,n}}h_{a_1}\cdots h_{a_r}.
\]
Apply Chen's composition formula to
\(\rho_{m,n}=p_m^{-1}*p_n\). Path reversal yields
\[
 \operatorname{reg}\int_{p_m^{-1}}h_{a_1}\cdots h_{a_j}
 =
 (-1)^j\mathcal H_m^{\mathrm{fr}}(a_j,\ldots,a_1),
\]
which proves the endpoint formula.

By the preceding subsection,
\[
 \mathcal H_m(a_1,\ldots,a_s)\in\mathcal Z_{N,s}.
\]
Moreover, for the standard branches,
\[
 \operatorname{Li}^{\shuffle}_1(\zeta_N)
 -
 \operatorname{Li}^{\shuffle}_1(\zeta_N^{-1})
 =
 \frac{N-2}{N}\pi\sqrt{-1}.
\]
Since \(N\geq3\), this implies
\(\pi\sqrt{-1}\in\mathcal Z_{N,1}\).
Equation~\eqref{eq:framed-radial-h-integral} and the shuffle product
therefore give
\[
 \mathcal H_m^{\mathrm{fr}}(a_1,\ldots,a_s)
 \in\mathcal Z_{N,s}.
\]
Each product in the endpoint formula has total weight \(r\).
A further application of the shuffle product proves the assertion.
\end{proof}

For explicit calculations, it is useful to combine the two tangential
rotations. Put
\[
 \Delta_{m,n}:=\frac{\pi\sqrt{-1}}{N}(m-n).
\]
Since
\[
 \rho_{m,n}
 =
 \ell_m^{-1}*\tau_m^{-1}*\tau_n*\ell_n,
\]
Chen's formula also gives
\begin{equation}
\label{eq:cotangent-endpoint-radial-formula}
\begin{aligned}
 &C_{\gamma_{m,n}}^{\reg}
 (a_0;\,a_1,\ldots,a_r;\,b_0)
 =
 \sum_{\substack{j,q\geq0\\j+q\leq r}}
 (-1)^j\frac{\Delta_{m,n}^{\,q}}{q!}
 \mathcal H_m(a_j,\ldots,a_1)
 \mathcal H_n(a_{j+q+1},\ldots,a_r).
\end{aligned}
\end{equation}
Here the \(q\) letters assigned to the middle tangential path all
contribute the same residue \(-1/2\).

\subsubsection{Explicit endpoint formulas in low weights}
We now combine the cyclotomic endpoint formula with the explicit
expressions for the radial \(h\)-integrals obtained in
Subsection~\ref{subsec:cotangent-cyclotomic-mzvs}.

Let \(m,n\in\mathbb Z\) represent
\(a_0,b_0\in\mathbb Z/N\mathbb Z\), respectively, and set
\[
 x_j:=\frac{\xi_m}{\xi_{a_j}},
 \qquad
 y_j:=\frac{\xi_n}{\xi_{a_j}},
 \qquad
 \Delta_{m,n}:=\frac{\pi\sqrt{-1}}{N}(m-n).
\]
We abbreviate \(\Delta_{m,n}\) to \(\Delta\) in the following formulas. Since
\[
 \operatorname{Li}_1(\zeta_N)
 -
 \operatorname{Li}_1(\zeta_N^{-1})
 =
 \frac{N-2}{N}\pi\sqrt{-1},
\]
the winding contribution can itself be written as the weight-one
cyclotomic multiple zeta value
\[
 \Delta
 =
 \frac{m-n}{N-2}
 \left(
  \operatorname{Li}_1(\zeta_N)
  -
  \operatorname{Li}_1(\zeta_N^{-1})
 \right).
\]
All singular values occurring below are understood by shuffle
regularization with respect to the tangential conventions fixed above.

\begin{eg}[Length one]
The endpoint formula becomes
\[
 C_{\gamma_{m,n}}^{\reg}(a_0;\,a_1;\,b_0)
 =
 \operatorname{Li}^{\shuffle}_1(x_1)
 -
 \operatorname{Li}^{\shuffle}_1(y_1)
 +\Delta.
\]
Thus both the endpoint logarithms and the prescribed winding
contribute to the value.
\end{eg}

\begin{eg}[Length two]
One has
\[
\begin{aligned}
 &C_{\gamma_{m,n}}^{\reg}(a_0;\,a_1,a_2;\,b_0)
 =
 \operatorname{Li}^{\shuffle}_{1,1}(y_1,y_2)
 +
 \operatorname{Li}^{\shuffle}_{1,1}(x_2,x_1)
 -
 \operatorname{Li}^{\shuffle}_1(x_1)
 \operatorname{Li}^{\shuffle}_1(y_2)\\
\  &+\frac12\Bigl(
 \operatorname{Li}^{\shuffle}_2(y_1)
 -
 \operatorname{Li}^{\shuffle}_2(y_2)
 +
 \operatorname{Li}^{\shuffle}_2(x_2)
 -
 \operatorname{Li}^{\shuffle}_2(x_1)
 \Bigr)
 +\Delta\Bigl(
 \operatorname{Li}^{\shuffle}_1(x_1)
 -
 \operatorname{Li}^{\shuffle}_1(y_2)
 \Bigr)
 +\frac{\Delta^2}{2}.
\end{aligned}
\]
Products can be rewritten as linear combinations of CMZVs by shuffle
identities. For instance,
\[
 \operatorname{Li}^{\shuffle}_1(x_1)
 \operatorname{Li}^{\shuffle}_1(y_2)
 =
 \operatorname{Li}^{\shuffle}_{1,1}(x_1,y_2)
 +
 \operatorname{Li}^{\shuffle}_{1,1}(y_2,x_1).
\]
\end{eg}

\begin{eg}[Length three]
Substituting the length-one, length-two, and length-three radial formulas
from Subsection~\ref{subsec:cotangent-cyclotomic-mzvs} into
\eqref{eq:cotangent-endpoint-radial-formula}, we obtain
\begin{align*}
C_{\gamma_{m,n}}^{\reg}&(a_0;\,a_1,a_2,a_3;\,b_0)
={}-\Li^{\shuffle}_{1,1,1}(y_1,y_2,y_3)
    +\Li^{\shuffle}_{1,1,1}(x_3,x_2,x_1)\\
&+\tfrac12\bigl\{
 \Li^{\shuffle}_{2,1}(y_2,y_3)+\Li^{\shuffle}_{1,2}(y_2,y_3)
-\Li^{\shuffle}_{2,1}(y_1,y_3)-\Li^{\shuffle}_{1,2}(y_1,y_2)\bigr\}\\
&+\tfrac12\bigl\{
-\Li^{\shuffle}_{2,1}(x_2,x_1)-\Li^{\shuffle}_{1,2}(x_2,x_1)
+\Li^{\shuffle}_{2,1}(x_3,x_1)+\Li^{\shuffle}_{1,2}(x_3,x_2)\bigr\}\\
&-\tfrac14\Li_3(y_1)+\tfrac12\Li_3(y_2)-\tfrac14\Li_3(y_3)
 +\tfrac14\Li_3(x_3)-\tfrac12\Li_3(x_2)+\tfrac14\Li_3(x_1)\\
&+\Li_1(x_1)\bigl\{
 \Li^{\shuffle}_{1,1}(y_2,y_3)
 +\tfrac12\Li_2(y_2)-\tfrac12\Li_2(y_3)\bigr\}\\
&-\bigl\{
 \Li^{\shuffle}_{1,1}(x_2,x_1)
 +\tfrac12\Li_2(x_2)-\tfrac12\Li_2(x_1)\bigr\}\Li_1(y_3)\\
&+\Delta\bigl\{
 \Li^{\shuffle}_{1,1}(y_2,y_3)
 +\Li^{\shuffle}_{1,1}(x_2,x_1)
 -\Li_1(x_1)\Li_1(y_3)\\
& 
 +\tfrac12\Li_2(y_2)-\tfrac12\Li_2(y_3)
 +\tfrac12\Li_2(x_2)-\tfrac12\Li_2(x_1)\bigr\}\\
&+\tfrac{\Delta^2}{2}
 \bigl\{\Li_1(x_1)-\Li_1(y_3)\bigr\}
 +\tfrac{\Delta^3}{6}.
\end{align*}
Every term in this expression has total weight three.  Products have
been retained to keep the formula readable.  Replacing \(\Delta\) by
its cyclotomic logarithmic expression above and applying the shuffle
product gives a \(\mathbb Q\)-linear combination of weight-three
level-\(N\) cyclotomic multiple zeta values.
\end{eg}

\begin{rem}
The integers \(m\) and \(n\) record the chosen lifts of the endpoint
labels and hence are part of the path data.  Simultaneously replacing
\((m,n)\) by \((m+Nk,n+Nk)\) does not change the tangential homotopy
class of \(\rho_{m,n}\), whereas changing only one of them changes its
winding around \(0\).  The resulting formulas therefore depend, for
fixed endpoints, only on \(m-n\).  Winding around the other punctures
requires additional path-composition terms.
\end{rem}

\begin{rem}
The cyclotomic formulas above are obtained after base change from
\(F=E^+\) to \(E=\mathbb Q(\mu_N)\).  They prove that the corresponding
periods belong to \(\mathcal Z_{N,r}\), but do not by themselves give
a mixed Tate interpretation over \(F\), since the transported
tangential data need not be \(F\)-rational.  The descent constructions
in the following sections address this point.
\end{rem}

\section{Cotangent-difference forms and the difference punctured line}
\label{sec:cotangent-difference-integrals}

Note that taking differences of cotangent differential forms removes their common contribution at the quadratic boundary of the punctured cotangent line. We will use this cancellation to realize the resulting integrals on a projective line punctured only at \(F\)-rational points, and then establish their mixed Tate interpretation.

Recall that \(N\geq3\),\(E=\mathbb Q(\mu_N)\), and \(F=E^+\). We retain the angular coordinates \(\theta_a\), the algebraic points \(v_a\), and the forms \(\eta_a^{\cot}\) and \(\kappa_a\) from the preceding sections.

\subsection{Cotangent-difference forms and their iterated integrals}
\label{subsec:cotangent-difference-kernels}
We define the cotangent-difference forms and derive their expressions
in the algebraic coordinate $v$, including the case in which one of
their poles lies at infinity.

For \(a,b\in\mathbb Z/N\mathbb Z\), put
\[
 \eta_{a,b}^{\mathrm{diff}}
 :=
 \eta_a^{\cot}-\eta_b^{\cot}
 =
 \bigl(
  \cot(\theta-\theta_a)-\cot(\theta-\theta_b)
 \bigr)\,d\theta.
\]
These forms satisfy
\[
 \eta_{a,a}^{\mathrm{diff}}=0,
 \qquad
 \eta_{a,b}^{\mathrm{diff}}
 =-\eta_{b,a}^{\mathrm{diff}},
 \qquad
 \eta_{a,b}^{\mathrm{diff}}
 +\eta_{b,c}^{\mathrm{diff}}
 =\eta_{a,c}^{\mathrm{diff}}.
\]

To describe the forms algebraically, use homogeneous coordinates
\([X_0:X_1]\) on \(\mathbb P^1_F\), with \(v=X_0/X_1\), and define
\[
 L_a(X_0,X_1)
 :=
 \begin{cases}
  X_0-v_aX_1, & v_a\neq\infty,\\
  X_1,     & v_a=\infty.
 \end{cases}
\]
The ratio \(L_a/L_b\) is a rational function over \(F\). Put
\[
 \kappa^{\mathrm{diff}}_{a,b}:=d\log\frac{L_a}{L_b}.
\]
Multiplying either linear form by a nonzero constant does not change
\(\kappa^{\mathrm{diff}}_{a,b}\). In the affine coordinate \(v\), this gives
\[
 \kappa^{\mathrm{diff}}_{a,b}
 =
 \begin{cases}
  \displaystyle\frac{dv}{v-v_a}-\frac{dv}{v-v_b},
     & v_a,v_b\neq\infty,\\[6pt]
  \displaystyle\frac{dv}{v-v_a},
     & v_a\neq\infty,\ v_b=\infty,\\[6pt]
  \displaystyle-\frac{dv}{v-v_b},
     & v_a=\infty,\ v_b\neq\infty,\\[6pt]
  0, & v_a=v_b.
 \end{cases}
\]

\begin{prop}[Algebraic realization of the cotangent-difference forms]
\label{prop:cotangent-difference-kernels}
Let \(\Phi\) denote the angular-to-algebraic coordinate map defined
in (\ref{eq:theta-to-v-map}). Then
\[
 \kappa^{\mathrm{diff}}_{a,b}=-\frac12(\kappa_a-\kappa_b),
 \qquad
 \Phi^*\kappa^{\mathrm{diff}}_{a,b}=\eta_{a,b}^{\mathrm{diff}}.
\]
If \(a\neq b\), the only poles of \(\kappa^{\mathrm{diff}}_{a,b}\) are \(v_a\)
and \(v_b\), with residues \(1\) and \(-1\), respectively.
In particular, \(\kappa^{\mathrm{diff}}_{a,b}\) is regular at both geometric
points of the quadratic boundary divisor.
\end{prop}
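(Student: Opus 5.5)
The identity \(\kappa^{\mathrm{diff}}_{a,b}=-\frac12(\kappa_a-\kappa_b)\) comes from comparing the rational functions whose logarithmic differentials define the two sides. I will treat the case \(v_a,v_b\neq\infty\) first. By Proposition~\ref{prop:cotangent-forms-over-F},
\[
 \frac{f_a}{f_b}=\frac{(v-v_b)^2}{(v-v_a)^2},
\]
because the common factor \(1+\lambda_Nv^2\) cancels. Hence
\[
 \kappa_a-\kappa_b=d\log\frac{f_a}{f_b}=-2\,d\log\frac{v-v_a}{v-v_b}=-2\,\kappa^{\mathrm{diff}}_{a,b}.
\]
If \(v_b=\infty\), then \(f_b=1+\lambda_Nv^2\) and \(f_a/f_b=(v-v_a)^{-2}\), which matches the case \(L_b=X_1\). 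The case \(v_a=\infty\) follows by the antisymmetry \(\kappa^{\mathrm{diff}}_{a,b}=-\kappa^{\mathrm{diff}}_{b,a}\) together with the same computation. If \(a=b\), both sides vanish. Two points need to be stated: in homogeneous terms, \(L_a/L_b\) equals \((v-v_a)/(v-v_b)\), or \(v-v_a\), or \((v-v_b)^{-1}\) up to a nonzero constant, and constants are killed by \(d\log\). Also, since \(v_a\) depends only on \(a \bmod N\) and the \(\theta_a\) are distinct mod \(\pi\), \(a\neq b\) implies \(v_a\neq v_b\).

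For the pullback, I apply Proposition~\ref{thm:algebraic-realization-cotangent-kernel}, which gives \(\Phi^*\kappa_a=-2\eta_a^{\cot}\). Linearity of pullback then yields
\[
 \Phi^*\kappa^{\mathrm{diff}}_{a,b}=-\tfrac12(-2\eta_a^{\cot}+2\eta_b^{\cot})=\eta^{\mathrm{diff}}_{a,b}.
\]
Alternatively one can pull back directly: \(v-v_a\) is a constant multiple of \(\sin(\theta-\theta_a)/\cos\theta\), so the \(\cos\theta\) factors cancel in the ratio.

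For the poles and residues, I read off \(\operatorname{div}(L_a/L_b)=[v_a]-[v_b]\) on \(\mathbb P^1_F\) when \(a\neq b\). This also holds when one point is \(\infty\): for \(v_b=\infty\) the function \(v-v_a\) has a simple zero at \(v_a\) and a simple pole at \(\infty\). The logarithmic differential of a function with this divisor has simple poles exactly at \(v_a\) and \(v_b\), with residues \(+1\) and \(-1\), and is regular everywhere else. In particular it is regular at \(\pm\delta_N^{-1}\), the geometric points of \(D_N^{\mathrm{Cay}}\), because \(L_a/L_b\) is a unit there: \(v_a,v_b\in\mathbb P^1(F)\) are real, while \(\pm\delta_N^{-1}\) are purely imaginary and nonzero. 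The same conclusion follows from Proposition~\ref{prop:cotangent-forms-over-F}, since the residues \(1\) of \(\kappa_a\) and \(\kappa_b\) at the Cayley points cancel in the difference.

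The only step that needs real care is the point at infinity, both in the residue bookkeeping and in confirming that the homogeneous definition reproduces the affine formulas. Once \(\operatorname{div}(L_a/L_b)\) is computed on all of \(\mathbb P^1\), this reduces to checking \(\operatorname{Res}_\infty\,d\log(v-v_a)=-1\), which is immediate in the coordinate \(1/v\).
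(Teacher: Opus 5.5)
Your proposal is correct and is essentially the paper's proof. Like the paper, you cancel the common Cayley factor in $f_a/f_b$, get the pullback by linearity from $\Phi^*\kappa_a=-2\eta_a^{\cot}$, and read off the residues from $\operatorname{div}(L_a/L_b)=[v_a]-[v_b]$. The only difference is that the paper avoids your case split at infinity by writing $\kappa_a=d\log\bigl((X_1^2-\delta_N^2X_0^2)/L_a(X_0,X_1)^2\bigr)$ uniformly in homogeneous coordinates.
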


\begin{proof}
In homogeneous coordinates, the definition of \(\kappa_a\) can
be written uniformly as
\[
 \kappa_a
 =
 d\log\frac{X_1^2-\delta_N^2X_0^2}{L_a(X_0,X_1)^2}.
\]
This includes \(v_a=\infty\), since then \(L_a=X_1\).
Consequently,
\[
 \kappa_a-\kappa_b
 =
 -2d\log\frac{L_a}{L_b}
 =
 -2\kappa^{\mathrm{diff}}_{a,b}.
\]
Applying \(\Phi^*\kappa_a=-2\eta_a^{\cot}\) proves the pullback
identity. The statements about poles and residues follow from
\[
 \operatorname{div}\left(\frac{L_a}{L_b}\right)
 =
 [v_a]-[v_b].
\]
\end{proof}

\begin{defn}[Iterated cotangent-difference integral]
\label{def:cotangent-difference-integral}
Let
\[
 a_0,b_0,a_1,b_1,\ldots,a_r,b_r\in\mathbb Z/N\mathbb Z,
\]
and let
\(\gamma:\theta_{a_0}\rightsquigarrow\theta_{b_0}\)
be an admissible path with prescribed tangential endpoint data.
\textit{Iterated cotangent-difference integral} is defined by
\[
 C_\gamma^{\mathrm{diff},\reg}
 \left(
  a_0;\,(a_1,b_1),\ldots,(a_r,b_r);\,b_0
 \right)
 :=
 \operatorname{reg}\int_\gamma
 \eta_{a_1,b_1}^{\mathrm{diff}}\cdots
 \eta_{a_r,b_r}^{\mathrm{diff}}.
\]
Its weight is the length \(r\). The empty integral has value \(1\).
\end{defn}

If \(\gamma_v=\Phi(\gamma)\), with the transported tangential
data, then Proposition~\ref{prop:cotangent-difference-kernels} gives
\begin{equation}
\label{eq:cotangent-difference-algebraic-integral}
 C_\gamma^{\mathrm{diff},\reg}
 \left(
  a_0;\,(a_1,b_1),\ldots,(a_r,b_r);\,b_0
 \right)
 =
 \operatorname{reg}\int_{\gamma_v}
 \kappa^{\mathrm{diff}}_{a_1,b_1}\cdots\kappa^{\mathrm{diff}}_{a_r,b_r}.
\end{equation}
Equivalently, by multilinearity,
\[
\begin{aligned}
 &C_\gamma^{\mathrm{diff},\reg}
 \left(
  a_0;\,(a_1,b_1),\ldots,(a_r,b_r);\,b_0
 \right)
 =
 \sum_{\varepsilon_1,\ldots,\varepsilon_r\in\{0,1\}}
 (-1)^{\varepsilon_1+\cdots+\varepsilon_r}
 C_\gamma^{\reg}
 \left(
  a_0;\,c_{1,\varepsilon_1},\ldots,c_{r,\varepsilon_r};\,b_0
 \right),
\end{aligned}
\]
where \(c_{j,0}=a_j\) and \(c_{j,1}=b_j\).

\begin{eg}[Length one]
Assume first that the four points
\[
 v_{a_0},\quad v_{b_0},\quad v_{a_1},\quad v_{b_1}
\]
are finite and that the integral converges without regularization. Since
\[
 \eta_{a_1,b_1}^{\mathrm{diff}}
 =
 d\log
 \frac{\sin(\theta-\theta_{a_1})}
      {\sin(\theta-\theta_{b_1})},
\]
we obtain
\[
\begin{aligned}
&C_\gamma^{\mathrm{diff}}
 \bigl(a_0;\,(a_1,b_1);\,b_0\bigr)
=
 \log_\gamma
 \frac{
  \sin(\theta_{b_0}-\theta_{a_1})
  \sin(\theta_{a_0}-\theta_{b_1})
 }{
  \sin(\theta_{b_0}-\theta_{b_1})
  \sin(\theta_{a_0}-\theta_{a_1})
 }.
\end{aligned}
\]
Equivalently, in the \(v\)-coordinate,
\[
\begin{aligned}
&C_\gamma^{\mathrm{diff}}
 \bigl(a_0;\,(a_1,b_1);\,b_0\bigr)
=
 \log_{\gamma_v}
 \frac{
  (v_{b_0}-v_{a_1})(v_{a_0}-v_{b_1})
 }{
  (v_{b_0}-v_{b_1})(v_{a_0}-v_{a_1})
 }.
\end{aligned}
\]
Thus a length-one cotangent-difference integral is a logarithm of an
\(F\)-rational cross-ratio. If some of the points coincide or are at
infinity, the same formula is interpreted using homogeneous coordinates
and tangential regularization.
\end{eg}

\begin{eg}[Length two]
By multilinearity,
\[
\begin{aligned}
C_\gamma^{\mathrm{diff},\reg}
 \bigl(a_0;\,(a_1,b_1),(a_2,b_2);\,b_0\bigr)
=
&
 C_\gamma^{\reg}(a_0;\,a_1,a_2;\,b_0)
 -C_\gamma^{\reg}(a_0;\,a_1,b_2;\,b_0)\\
&-C_\gamma^{\reg}(a_0;\,b_1,a_2;\,b_0)
 +C_\gamma^{\reg}(a_0;\,b_1,b_2;\,b_0).
\end{aligned}
\]
In particular, the cotangent-difference integrals are explicit
alternating sums of the iterated cotangent integrals introduced earlier.
\end{eg}


\begin{eg}[A level-\(4\) example]
Let \(N=4\), and let \(\gamma\) be the increasing real interval from
\(\theta_0=0\) to \(\theta_1=\pi/4\). Neither
\(\eta_2^{\cot}\) nor \(\eta_3^{\cot}\) has a pole on this interval.
The length-one formula gives
\[
\begin{aligned}
 C_\gamma^{\mathrm{diff}}
 \bigl(0;\,(2,3);\,1\bigr)
 =
 \log
 \frac{
  \sin(\pi/4-\pi/2)\sin(0-3\pi/4)
 }{
  \sin(\pi/4-3\pi/4)\sin(0-\pi/2)
 }
 =-\log 2.
\end{aligned}
\]
Consequently,
\[
 C_\gamma^{\mathrm{diff}}
 \bigl(0;\,(2,3),(2,3);\,1\bigr)
 =
 \frac12\log^2 2.
\]
\end{eg}

\subsection{The difference punctured line}
\label{subsec:cotangent-difference-punctured-line}
In this subsection, we identify the algebraic curve naturally underlying
the cotangent-difference forms.  We show that these forms extend across
the quadratic boundary of the cotangent punctured line, leaving a
projective line punctured only at \(F\)-rational points.

Define the reduced divisor
\[
 D_N^{\mathrm{diff}}
 :=
 \sum_{a\in\mathbb Z/N\mathbb Z}[v_a]
\]
on \(\mathbb P^1_F\), and define the \textit{difference punctured line} by
\[
 X_N^{\mathrm{diff}}
 :=
 \mathbb P^1_F\setminus D_N^{\mathrm{diff}}.
\]
The points \(v_a\) are distinct and belong to \(\mathbb P^1(F)\).
Since the cotangent punctured line is obtained by removing, in addition, the
nonsplit quadratic boundary divisor
\( D_\infty:= \{\delta_N^{-1},-\delta_N^{-1}\}\),
there is a natural open immersion
\[
 j:X_N^{\cot}\hookrightarrow X_N^{\mathrm{diff}}.
\]
Every \(\kappa^{\mathrm{diff}}_{a,b}\) extends to a logarithmic form on
\(X_N^{\mathrm{diff}}\), so
\eqref{eq:cotangent-difference-algebraic-integral} depends only on
the image of the path in this larger curve.

For a fixed \(c\in\mathbb Z/N\mathbb Z\), the forms
\[
 \kappa^{\mathrm{diff}}_{a,c},
 \qquad a\neq c,
\]
form an \(F\)-basis of
\(H^1_{\mathrm{dR}}(X_N^{\mathrm{diff}}/F)\).
Indeed, the residue map identifies this space with
\[
 \left\{
  (r_a)_{a\in\mathbb Z/N\mathbb Z}\in F^N:
  \sum_a r_a=0
 \right\},
\]
and \(\kappa^{\mathrm{diff}}_{a,c}\) corresponds to the vector with entries
\(1\) at \(a\), \(-1\) at \(c\), and zero elsewhere.
At the motivic level, the localization sequence gives
\[
 H^1(X_N^{\mathrm{diff}})
 \simeq
 \mathbb Q(-1)^{\oplus(N-1)}.
\]

After base change to \(E\), the Cayley transformation identifies
\[
 (X_N^{\mathrm{diff}})_E
 \simeq
 \mathbb P^1_E\setminus\mu_N,
\]
and under this identification,
\[
 \kappa^{\mathrm{diff}}_{a,b}
 =
 \omega_{\xi_a}-\omega_{\xi_b}.
\]
Thus the cyclotomic points \(z=0\) and \(z=\infty\) are ordinary
points of this curve, rather than punctures.

We next specify the integral base. Let \(S\) be the set of finite
places of \(F\) above the rational primes dividing \(N\), and put
\[
 \mathcal O_{F,S}=\mathcal O_F[1/N].
\]

\begin{prop}[Good reduction of the marked punctured line]
\label{prop:cotangent-difference-good-reduction}
The marked curve
\[
 \bigl(\mathbb P^1_F,(v_a)_{a\in\mathbb Z/N\mathbb Z}\bigr)
\]
admits a model over \(\mathcal O_{F,S}\) whose marked sections
are pairwise disjoint.
\end{prop}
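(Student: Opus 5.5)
The plan is to construct the model by choosing a different $F$-rational coordinate, not to use the coordinate $v$ itself. The reason is that the naive model $\mathbb P^1_{\mathcal O_{F,S}}$ in the coordinate $v$ can fail when $N$ is odd. In homogeneous coordinates $v_a=[\xi_a-1:\delta_N(\xi_a+1)]$, the $2\times2$ determinants are
$$2\delta_N(\xi_a-\xi_b).$$
The factor $2$ is then not an $S$-unit. For example, at $N=3$ one has $v_a\in\{0,1,-1\}$, and the points $1$ and $-1$ collide modulo $2$. Instead, I will exploit the fact that disjointness of marked sections of $\mathbb P^1$ is governed by cross-ratios, which are coordinate-independent and can be computed in the cyclotomic coordinate $z$.

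\textbf{Step 1: cyclotomic units.} For $\xi,\xi'\in\mu_N$ with $\xi\neq\xi'$, the difference $\xi-\xi'$ is an $S$-unit in $\mathcal O_E[1/N]$. Indeed, $\xi-\xi'=\xi(1-\xi'/\xi)$. For a nontrivial root of unity $\eta$ of order $d\mid N$, the element $1-\eta$ divides $\prod_{\eta'\neq1,\ \eta'^d=1}(1-\eta')=d$, so $1-\eta$ is an $S$-unit.

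\textbf{Step 2: cross-ratios of the $v_a$.} Put
$$\operatorname{cr}(p,q;r,s)=\frac{(p-r)(q-s)}{(p-s)(q-r)}.$$
By Proposition~\ref{prop:cotangent-cyclotomic-isomorphism}, $\Psi$ is a Möbius transformation over $E$ sending $v_a\mapsto\xi_a$, and Möbius transformations preserve cross-ratios. Hence for distinct $a,b,c,d$,
$$\operatorname{cr}(v_a,v_b;v_c,v_d)=\operatorname{cr}(\xi_a,\xi_b;\xi_c,\xi_d).$$
By Step~1, this is an $S$-unit of $E$. It also lies in $F$, since all $v_a\in\mathbb P^1(F)$. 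Therefore it is an $S$-unit of $\mathcal O_{F,S}$, because $\mathcal O_E[1/N]^\times\cap F=\mathcal O_{F,S}^\times$. The same holds for $1-\operatorname{cr}$, since it is again a cross-ratio of the same four points.

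\textbf{Step 3: the model.} Fix three distinct labels $0,1,2$; this is possible since $N\geq3$. Let $w$ be the unique $F$-rational coordinate on $\mathbb P^1_F$ with
$$w(v_0)=0,\qquad w(v_1)=\infty,\qquad w(v_2)=1.$$
Concretely, $w=\operatorname{cr}(v,v_2;v_0,v_1)$ up to the standard normalization. For any other label $a$, the value $w_a:=w(v_a)$ is a cross-ratio of $v_0,v_1,v_2,v_a$, so by Step~2 both $w_a$ and $1-w_a$ are $S$-units. For $a\neq b$ outside $\{0,1,2\}$, the quotient $w_a/w_b$ is again a cross-ratio $\operatorname{cr}(v_a,v_b;v_0,v_1)$. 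Hence it and $1-w_a/w_b$ are $S$-units, and so
$$w_b-w_a=w_b\,(1-w_a/w_b)\in\mathcal O_{F,S}^\times.$$
Take the model $\mathbb P^1_{\mathcal O_{F,S}}$ in the coordinate $w$. Its marked sections are $0$, $\infty$, $1$, and the $w_a$, all of which are integral. Two integral sections $x,y$ distinct from $\infty$ are disjoint exactly when $x-y$ is a unit, and every finite integral section is disjoint from $\infty$. The unit conditions just verified therefore show that all the marked sections are pairwise disjoint.

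The main obstacle is the one identified at the outset: recognizing that the coordinate $v$ does not itself give a good model, because of the prime $2$ when $N$ is odd, so that a coordinate change is required. Once one works with cross-ratios, the remaining steps reduce to the classical fact that differences of distinct $N$-th roots of unity are $S$-units, together with descent of units from $E$ to $F$.
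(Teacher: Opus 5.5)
Your proposal is correct and follows essentially the same route as the paper: you fix an $F$-rational coordinate sending three marked points to $0,1,\infty$, and you use Cayley-invariance of cross-ratios to express the $t_a$ and their differences through differences of $N$-th roots of unity. You then descend the $S$-unit property from $E$ to $F$, as the paper does. The differences are cosmetic: you get $w_b-w_a=w_b(1-w_a/w_b)$ as a product of two cross-ratios rather than writing an explicit formula for $t_a-t_b$, and your observation that the naive $v$-model fails at $2$ for odd $N$ (e.g.\ $N=3$) is correct and explains why the coordinate change is needed.
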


\begin{proof}
Choose three distinct labels \(c_0,c_1,c_\infty\), and let
\[
 t:\mathbb P^1_F\xrightarrow{\sim}\mathbb P^1_F
\]
be the unique projective coordinate satisfying
\[
 t(v_{c_0})=0,
 \qquad
 t(v_{c_1})=1,
 \qquad
 t(v_{c_\infty})=\infty.
\]
Write \(t_a=t(v_a)\). By invariance of cross-ratios under the
Cayley transformation, for \(a\neq c_\infty\),
\[
 t_a
 =
 \frac{
  (\xi_a-\xi_{c_0})(\xi_{c_1}-\xi_{c_\infty})
 }{
  (\xi_a-\xi_{c_\infty})(\xi_{c_1}-\xi_{c_0})
 }.
\]
Every nonzero difference of two \(N\)-th roots of unity is a unit
after inverting \(N\). Hence
\[
 t_a\in\mathcal O_{F,S}^{\times}
 \qquad
 (a\neq c_0,c_\infty).
\]
Similarly, for distinct \(a,b\neq c_\infty\),
\[
 t_a-t_b
 =
 \frac{
  (\xi_{c_1}-\xi_{c_\infty})
  (\xi_{c_0}-\xi_{c_\infty})(\xi_a-\xi_b)
 }{
  (\xi_{c_1}-\xi_{c_0})
  (\xi_a-\xi_{c_\infty})(\xi_b-\xi_{c_\infty})
 }
 \in\mathcal O_{F,S}^{\times}.
\]
These expressions lie in \(F\), since \(t\) and the points \(v_a\)
are defined over \(F\). Their unit property over the cyclotomic
extension therefore implies the asserted unit property over \(F\).

Consequently, the finite points \(t_a\) define pairwise disjoint
sections of \(\mathbb A^1_{\mathcal O_{F,S}}\), also disjoint
from the section at infinity. They give the required model.
\end{proof}

We fix such a normalized coordinate \(t\) and write
\[
 \mathcal X_N^{\mathrm{diff}}
 :=
 \mathbb P^1_{\mathcal O_{F,S}}
 \setminus\bigcup_a[t_a]
\]
for the resulting smooth integral model.
At a finite marked point we may use \(t-t_a\) as a local parameter;
at infinity we use \(1/t\). A tangential vector is called
\textit{\(S\)-integral} here if its value under the differential of
the corresponding local parameter is an element of
\(\mathcal O_{F,S}^{\times}\).
In particular, these choices provide nonzero \(S\)-integral
tangential vectors at every puncture.

\section{Motivic cotangent-difference periods and Goncharov's coproduct}
\label{sec:cotangent-difference-motivic-coaction}

In this section, we give a motivic interpretation of
cotangent-difference integrals over the totally real cyclotomic field \(F=E^+\). The difference forms extend to logarithmic forms on \(X_N^{\mathrm{diff}}\), a projective line over \(F\) punctured only at \(F\)-rational points. Base change to \(E\) recovers their cyclotomic description.  With \(F\)-rational tangential endpoints, the integrals are mixed-Tate periods over \(F\); with \(S\)-integral endpoint data, they are periods over \(\mathcal O_{F,S}\). We then show that the algebra generated by the corresponding graded framed-motive classes is stable under Goncharov's coproduct.

\subsection{Motivic periods and mixed-Tate descent}
\label{subsec:motdiff-motivic-lift}

We first recall the motivic period and framed-object conventions, and then construct the motivic lifts of cotangent-difference integrals.

Put \(\mathcal T:=\operatorname{MT}(\mathcal O_{F,S})\), and fix the
embedding \(\sigma:F\hookrightarrow\mathbb C\) induced by
\(E\subset\mathbb C\). The category of mixed-Tate motives over rings
of \(S\)-integers and its motivic fundamental groupoids are constructed
in \cite[Sections~1 and~4]{DeligneGoncharov2005}.


For \(M\in\mathcal T\), let \(\omega_{\mathrm{dR}}(M)\) and \(\omega_{\mathrm B,\sigma}(M)\) denote its de Rham and Betti realizations. The comparison isomorphism associated with \(\sigma\) is
\[
 \operatorname{comp}_\sigma:
 \omega_{\mathrm{dR}}(M)\otimes_{F,\sigma}\mathbb C
 \xrightarrow{\sim}
 \omega_{\mathrm B,\sigma}(M)\otimes_{\mathbb Q}\mathbb C.
\]
A motivic period is represented by a triple
\[
 [M,f,v]^{\mathfrak m},\qquad
 v\in\omega_{\mathrm{dR}}(M),\quad
 f\in\omega_{\mathrm B,\sigma}(M)^\vee.
\]
Functoriality and linearity identify equivalent triples, and tensor products define their multiplication. Their numerical values are obtained by the comparison pairing
\[
 \operatorname{per}_\sigma[M,f,v]^{\mathfrak m}
 =f\bigl(\operatorname{comp}_\sigma(v)\bigr).
\]
Write \(\mathcal P^{\mathfrak m}(\mathcal T)\) for the algebra of these motivic matrix coefficients and
\[
 \operatorname{Per}_\sigma(\mathcal T)
 :=\operatorname{per}_\sigma\bigl(\mathcal P^{\mathfrak m}(\mathcal T)\bigr)
 \subset\mathbb C
\]
for its numerical period algebra. Thus a formal class \([M,f,v]^{\mathfrak m}\) retains its motivic data, while \(\operatorname{per}_\sigma\) returns its complex comparison value. We use the same conventions for \(\operatorname{MT}(F)\); see \cite[Sections~2.2--2.3]{BrownMotivicPeriods}.

Let $\mathcal A^{\mathcal M}_{F,S}$ be the graded Hopf algebra
of framed objects in $\mathcal T$. An $r$-framed object is an
object $P\in\mathcal T$ equipped with nonzero morphisms
\[
 v_0:\mathbb Q(0)\longrightarrow\operatorname{gr}_0^W P,
 \qquad
 f_r:\operatorname{gr}_{-2r}^W P\longrightarrow\mathbb Q(r).
\]
The $r$-th degree component $\mathcal A^{\mathcal M}_{F,S,r}$ consists of classes of $r$-framed objects under the equivalence relation generated by morphisms respecting both frames. The direct sum with the induced frames defines the addition in $\mathcal A^{\mathcal M}_{F,S}$, and the tensor product defines the multiplication in $\mathcal A^{\mathcal M}_{F,S}$. The coproduct is defined by inserting dual bases of intermediate graded pieces $\operatorname{gr}_{-2p}^W P$, for $0\leq p\leq r$; see
\cite[Section~8.2]{Goncharov2005}. We denote it by
$\Delta_{\mathcal M}$.

Let \(\gamma:\theta_{a_0}\rightsquigarrow\theta_{b_0}\) be an admissible angular path, and let \(\gamma_v=\Phi(\gamma)\) be its image on \(X_N^{\mathrm{diff}}(\mathbb C)\). Assume that the transported endpoint tangents \(\vec x\) and \(\vec y\) are nonzero and \(F\)-rational. Put
\[
 \lambda_j:=\kappa^{\mathrm{diff}}_{a_j,b_j},
 \qquad \boldsymbol\lambda=(\lambda_1,\ldots,\lambda_r).
\]
The Deligne--Goncharov construction attaches a unipotent motivic path torsor \(\Pi^{\mathfrak m}_{\vec x,\vec y}\) to these base points. Its coordinate ring has finite mixed-Tate subobjects
\[
 M_{\leq r}(\vec x,\vec y)
 :=L_{\leq r}\mathcal O(\Pi^{\mathfrak m}_{\vec x,\vec y}),
\]
where \(L\) is the bar-length filtration; see \cite[Section~4]{DeligneGoncharov2005}.

The bar word \(v_{\boldsymbol\lambda}=[\lambda_1|\cdots|\lambda_r]\) defines a de Rham vector in \(M_{\leq r}(\vec x,\vec y)\), and evaluation at the Betti path defines a covector \(f_{\gamma_v}(g)=g([\gamma_v]_{\mathrm B})\). Every word in these logarithmic forms is integrable: the forms are closed and their pairwise wedge products vanish on the curve. In the convention that \([M,f,v]^{\mathfrak m}\) has a Betti covector \(f\) and a de Rham vector \(v\), set
\begin{equation}
 I_{\gamma_v}^{\mathfrak m}
 (\vec x;\lambda_1|\cdots|\lambda_r;\vec y)
 :=[M_{\leq r}(\vec x,\vec y),f_{\gamma_v},v_{\boldsymbol\lambda}]^{\mathfrak m}.
 \label{eq:motdiff-motivic-lift}
\end{equation}
Tangential regularization throughout uses local parameters normalized by the specified tangent vectors, as in the analytic definition.

\begin{thm}[Mixed-Tate descent of cotangent-difference integrals]
\label{thm:cotangent-difference-mixed-tate}
With the endpoint data above,
\begin{equation}\label{eq: comparison motivic period}
 I_{\gamma_v}^{\mathfrak m}
 (\vec x;\lambda_1|\cdots|\lambda_r;\vec y)
 \in\mathcal P^{\mathfrak m}(\operatorname{MT}(F)),
\end{equation}
and its comparison period is
\begin{equation}
\begin{aligned}
 \operatorname{per}_\sigma I_{\gamma_v}^{\mathfrak m}
 (\vec x;\lambda_1|\cdots|\lambda_r;\vec y)
 &=\operatorname{reg}\int_{\gamma_v}\lambda_1\cdots\lambda_r\\
 &=C_\gamma^{\mathrm{diff},\mathrm{reg}}
 \bigl(a_0;(a_1,b_1),\ldots,(a_r,b_r);b_0\bigr).
\end{aligned}
\label{eq:motdiff-period-comparison}
\end{equation}
If \(\vec x\) and \(\vec y\) generate the corresponding tangent lines over \(\mathcal O_{F,S}\), then this motivic period belongs to
\[
 \mathcal P^{\mathfrak m}\bigl(\operatorname{MT}(\mathcal O_{F,S})\bigr).
\]
\end{thm}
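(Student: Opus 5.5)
The plan is to reduce all three assertions to the Deligne--Goncharov theory for \(\mathbb P^1\) minus finitely many rational points, using the geometric facts about \(X_N^{\mathrm{diff}}\) established in Section~\ref{sec:cotangent-difference-integrals}. First, since every puncture \(v_a\) lies in \(\mathbb P^1(F)\) (the \(F\)-rationality of the \(v_a\) was shown in Section~\ref{sec:iterated-cotangent-integrals}), \(X_N^{\mathrm{diff}}\) is \(\mathbb P^1_F\) minus a finite set of \(F\)-rational points. By \cite[Sections~3--4]{DeligneGoncharov2005}, the motivic fundamental groupoid of such a curve, with \(F\)-rational tangential base points \(\vec x,\vec y\), is a pro-object of \(\operatorname{MT}(F)\). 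Concretely, \(H^1(X_N^{\mathrm{diff}})\simeq\mathbb Q(-1)^{\oplus(N-1)}\) is pure Tate, and the tangent spaces at \(\vec x,\vec y\) are Tate. Each bar-length piece \(M_{\leq r}(\vec x,\vec y)\) is therefore an iterated extension of \(\mathbb Q(-k)\)'s, and hence an object of \(\operatorname{MT}(F)\). Since \(v_{\boldsymbol\lambda}\) is an \(F\)-rational de Rham vector (each \(\lambda_j\) is \(F\)-rational by Proposition~\ref{prop:cotangent-difference-kernels}) and \(f_{\gamma_v}\) is a Betti covector, the triple in \eqref{eq:motdiff-motivic-lift} is a motivic period of \(\operatorname{MT}(F)\). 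This gives \eqref{eq: comparison motivic period}. Integrability of the bar word is automatic on a curve, so \(v_{\boldsymbol\lambda}\) really lies in \(L_{\leq r}\).

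Second, for the period \eqref{eq:motdiff-period-comparison}, I would invoke the description of the comparison isomorphism for \(\mathcal O(\Pi_{\vec x,\vec y})\). Under \(\operatorname{comp}_\sigma\), a de Rham bar element \([\omega_1|\cdots|\omega_r]\) paired with a Betti path \(\gamma\) gives the iterated integral \(\int_\gamma\omega_1\cdots\omega_r\), regularized with respect to the local parameters normalized by the tangent vectors (Deligne~\cite{Deligne1989}, \cite[Section~5]{DeligneGoncharov2005}). This yields the first equality. The second equality is \eqref{eq:cotangent-difference-algebraic-integral}, which follows from \(\Phi^*\kappa^{\mathrm{diff}}_{a,b}=\eta^{\mathrm{diff}}_{a,b}\) in Proposition~\ref{prop:cotangent-difference-kernels}. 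I would also check that the regularization conventions match: the constant term in \(\log\varepsilon\) is computed with the same normalized parameters \(t_c\) transported by \(d\Phi\). This uses compatibility of tangential regularization with analytic isomorphisms, as already used in Proposition~\ref{thm:algebraic-realization-cotangent-kernel}.

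Third, for the \(S\)-integral refinement, I would use Proposition~\ref{prop:cotangent-difference-good-reduction}. The integral model \(\mathcal X_N^{\mathrm{diff}}=\mathbb P^1_{\mathcal O_{F,S}}\setminus\bigcup_a[t_a]\) has pairwise disjoint marked sections, so it is smooth with relative normal-crossings boundary. By \cite[Proposition~4.17]{DeligneGoncharov2005} (the good-reduction criterion), the motivic groupoid with base points given by tangent vectors that generate the tangent lines over \(\mathcal O_{F,S}\) is unramified outside \(S\). Equivalently, its \(\ell\)-adic realizations are unramified at every finite place \(\mathfrak p\notin S\), so each \(M_{\leq r}(\vec x,\vec y)\) lies in the full subcategory \(\operatorname{MT}(\mathcal O_{F,S})\subset\operatorname{MT}(F)\). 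Because this subcategory is full and closed under subquotients, the matrix coefficient lies in \(\mathcal P^{\mathfrak m}(\operatorname{MT}(\mathcal O_{F,S}))\).

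The main obstacle is this third step: verifying that the hypothesis of \cite{DeligneGoncharov2005} is met. Their criterion requires that at every \(\mathfrak p\notin S\) the reductions of all punctures stay distinct, and that the base-point tangent vectors reduce to nonzero vectors. The first condition is exactly the unit computation of \(t_a-t_b\in\mathcal O_{F,S}^\times\) in Proposition~\ref{prop:cotangent-difference-good-reduction}. The second is the \(S\)-integrality assumption on \(\vec x,\vec y\), phrased via the local parameters \(t-t_a\) and \(1/t\). I would first try to deduce the statement from the cyclotomic case over \(\mathbb Z[\mu_N,1/N]\) by base change to \(E\) and Galois descent. If that proves awkward, I would fall back on a direct appeal to the specialization argument of \cite[Section~4]{DeligneGoncharov2005} applied to the model \(\mathcal X_N^{\mathrm{diff}}\).
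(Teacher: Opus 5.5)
Your proposal is correct and follows the paper's proof essentially step for step. The mixed-Tate property over \(F\) comes from \cite[Section~4]{DeligneGoncharov2005}, since the punctures of \(X_N^{\mathrm{diff}}\) and the forms \(\lambda_j\) are \(F\)-rational; the period formula comes from the comparison pairing together with \(\Phi^*\kappa^{\mathrm{diff}}_{a,b}=\eta^{\mathrm{diff}}_{a,b}\); and the \(S\)-integral refinement comes from Proposition~\ref{prop:cotangent-difference-good-reduction} with \cite[Proposition~4.17]{DeligneGoncharov2005}, so the Galois-descent detour through the cyclotomic case is unnecessary.
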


\begin{proof}
The curve \(X_N^{\mathrm{diff}}\) is a projective line with \(F\)-rational punctures, and the forms \(\lambda_j\) are defined over \(F\). Its motivic path torsors between \(F\)-rational tangential base points are therefore ind-mixed-Tate objects, by \cite[Section~4]{DeligneGoncharov2005}. The coefficient in \eqref{eq:motdiff-motivic-lift} lies in the finite object \(M_{\leq r}(\vec x,\vec y)\), so it is a mixed-Tate motivic period.

The comparison pairing evaluates the de Rham word against \(\gamma_v\), giving the first equality in \eqref{eq:motdiff-period-comparison}. The second follows from \(\Phi^*\kappa^{\mathrm{diff}}_{a,b}=\eta^{\mathrm{diff}}_{a,b}\), with transported tangential data.

By Proposition~\ref{prop:cotangent-difference-good-reduction}, the curve has a smooth proper compactification over \(\mathcal O_{F,S}\) with disjoint rational boundary sections. When the endpoint tangents generate their integral tangent lines, the integral path-torsor construction applies; see \cite[Proposition~4.17 and Remark~4.18]{DeligneGoncharov2005}.
\end{proof}

\begin{rem}
The rationality condition concerns the algebraic tangent vectors, not the Betti path; the numerical period need not be real. Transported angular tangents need not be \(F\)-rational. Rescaling an admissible tangent by an \(S\)-unit preserves the integral condition above.
\end{rem}
\subsection{The Hopf algebra of motivic cotangent differences}
\label{subsec:motdiff-coaction-stability}

We express cotangent-difference forms in a fixed logarithmic basis
and apply Goncharov's coproduct to the corresponding Tate-framed
motivic iterated integrals.

Retain the coordinate \(t\) and the label \(c_\infty\) chosen above,
so that \(t(v_{c_\infty})=\infty\), and put
\[
 J=(\mathbb Z/N\mathbb Z)\setminus\{c_\infty\}.
\]
For \(d\in J\), write \(t_d=t(v_d)\) and set
\begin{equation*}
 \kappa_d^\circ
 :=\kappa^{\mathrm{diff}}_{d,c_\infty}
 =\frac{dt}{t-t_d},
 \qquad
 \kappa_{c_\infty}^\circ:=0.
 \label{eq:motdiff-fixed-alphabet}
\end{equation*}
The second assignment is only a notational convention. We have
identities of differential forms
\begin{equation*}
 \kappa^{\mathrm{diff}}_{a,b}
 =\kappa_a^\circ-\kappa_b^\circ,
 \qquad
 H^1_{\mathrm{dR}}(X_N^{\mathrm{diff}}/F)
 =\bigoplus_{d\in J}F\kappa_d^\circ.
 \label{eq:motdiff-letter-expansion}
\end{equation*}
More explicitly, if
\[
 \varepsilon_{a,b}(d):=\delta_{a,d}-\delta_{b,d},
\]
then
\begin{equation}
 \kappa^{\mathrm{diff}}_{a,b}
 =\sum_{d\in J}\varepsilon_{a,b}(d)\kappa_d^\circ.
 \label{eq:motdiff-difference-basis-epsilon}
\end{equation}
Since each \(\kappa_d^\circ\) is itself a difference form, words in
difference forms and words in the \(\kappa_d^\circ\) have the same
\(\mathbb Q\)-span. The equality in
\eqref{eq:motdiff-difference-basis-epsilon} holds for forms, so it
may be applied to iterated integrals letter by letter.

After base change to \(E\), the Cayley coordinate gives
\[
 \kappa_d^\circ
 =\frac{dz}{z-\xi_d}
  -\frac{dz}{z-\xi_{c_\infty}}.
\]
Thus the two residues at the restored point \(z=\infty\) cancel.

We now fix the tangential base points used throughout the coproduct
formula. For \(a\in\mathbb Z/N\mathbb Z\), put
\[
 q_a=
 \begin{cases}
  t-t_a,&a\in J,\\
  1/t,&a=c_\infty,
 \end{cases}
 \qquad
 \vec v_a
 :=\left.\frac{\partial}{\partial q_a}\right|_{v_a},
\]
and define
\begin{equation}
 \mathcal B_N:=\{\vec v_a:a\in\mathbb Z/N\mathbb Z\}.
 \label{Definition of basepoint set}
\end{equation}
These tangent vectors are defined over \(F\) and generate the
tangent lines of the chosen integral model.

Goncharov's motivic fundamental groupoid is defined using fixed
tangential base points at the punctures; in an affine coordinate,
his standard choice satisfies \(dt(\vec v_a)=1\)
\cite[Section~3]{Goncharov2005}. We use that choice at the finite
punctures and the local coordinate \(q_{c_\infty}=1/t\) at infinity.
All intermediate base points appearing in the coproduct are taken
from this same system.

For \(\beta_0,\beta_1\in\mathcal B_N\) and \(d_1,\ldots,d_r\in J\),
let
\[
 I^{\mathfrak m}
 (\beta_0;\kappa_{d_1}^\circ|\cdots|
             \kappa_{d_r}^\circ;\beta_1)
 \in\mathcal A^{\mathcal M}_{F,S,r}
\]
denote Goncharov's Tate-framed motivic iterated-integral class
attached to this sequence of logarithmic forms and the specified
tangential base points. In particular, the framing and Tate
normalization are those of
\cite[Sections~3 and~8]{Goncharov2005}. We extend the notation
multilinearly to difference forms. Thus
\begin{equation}
\begin{aligned}
&I^{\mathfrak m}
 (\beta_0;
  \kappa^{\mathrm{diff}}_{a_1,b_1}|\cdots|
  \kappa^{\mathrm{diff}}_{a_r,b_r};
  \beta_1)
 =
 \sum_{d_1,\ldots,d_r\in J}
 \left(\prod_{j=1}^r\varepsilon_{a_j,b_j}(d_j)\right)
 I^{\mathfrak m}
 (\beta_0;\kappa_{d_1}^\circ|\cdots|
             \kappa_{d_r}^\circ;\beta_1).
\end{aligned}
\label{eq:motdiff-signed-expansion}
\end{equation}

The motivic period
\(I_{\gamma_v}^{\mathfrak m}\) defined in 
\eqref{eq: comparison motivic period}
also records a Betti path \(\gamma_v\). The class
\(I^{\mathfrak m}\) above is instead a Tate-framed class in the
graded Hopf algebra \(\mathcal A^{\mathcal M}_{F,S}\).
The coproduct below acts on the latter class.

Define the unital algebra
\begin{equation*}
\begin{aligned}
 \mathcal H_N^{\mathrm{diff},\mathfrak m}(\mathcal B_N)
 :=\mathbb Q\bigl[
 &I^{\mathfrak m}\bigl(
   \beta_0;
   \kappa^{\mathrm{diff}}_{a_1,b_1}|\cdots|
   \kappa^{\mathrm{diff}}_{a_r,b_r};
   \beta_1\bigr):
\ r\geq0,\
 \beta_0,\beta_1\in\mathcal B_N,\
 a_j,b_j\in\mathbb Z/N\mathbb Z
 \bigr]
 \subseteq\mathcal A^{\mathcal M}_{F,S}.
\end{aligned}
\label{eq:motdiff-motivic-algebra-definition}
\end{equation*}
For fixed endpoints, the generators satisfy the shuffle product.

\begin{thm}[Hopf algebra of motivic cotangent differences]
\label{thm:cotangent-difference-motivic-hopf}
Let \(d_1,\ldots,d_r\in J\) and
\(\beta_0,\beta_1\in\mathcal B_N\). For
\[
 0=i_0<i_1<\cdots<i_k<i_{k+1}=r+1,
\]
set
\[
 \beta^{(0)}=\beta_0,\qquad
 \beta^{(k+1)}=\beta_1,\qquad
 \beta^{(h)}=\vec v_{d_{i_h}}\quad(1\leq h\leq k).
\]
Then Goncharov's coproduct is
\begin{equation}
\begin{aligned}
&\Delta_{\mathcal M}
 I^{\mathfrak m}
 (\beta_0;\kappa_{d_1}^\circ|\cdots|
             \kappa_{d_r}^\circ;\beta_1)=\\
 \sum_{k=0}^{r}
 \sum_{0=i_0<\cdots<i_k<i_{k+1}=r+1}&
 I^{\mathfrak m}
 (\beta_0;\kappa_{d_{i_1}}^\circ|\cdots|
             \kappa_{d_{i_k}}^\circ;\beta_1)
 \otimes
 \prod_{h=0}^{k}
 I^{\mathfrak m}\bigl(
    \beta^{(h)};
    \kappa_{d_{i_h+1}}^\circ|\cdots|
    \kappa_{d_{i_{h+1}-1}}^\circ;
    \beta^{(h+1)}\bigr).
\end{aligned}
\label{eq:motdiff-goncharov-coproduct}
\end{equation}
Empty words have value \(1\). Consequently,
\begin{equation}
 \Delta_{\mathcal M}
 \mathcal H_N^{\mathrm{diff},\mathfrak m}(\mathcal B_N)
 \subseteq
 \mathcal H_N^{\mathrm{diff},\mathfrak m}(\mathcal B_N)
 \otimes_{\mathbb Q}
 \mathcal H_N^{\mathrm{diff},\mathfrak m}(\mathcal B_N).
 \label{eq:motdiff-coproduct-closure}
\end{equation}
Hence
\(\mathcal H_N^{\mathrm{diff},\mathfrak m}(\mathcal B_N)\)
is a connected graded Hopf subalgebra of
\(\mathcal A^{\mathcal M}_{F,S}\).
\end{thm}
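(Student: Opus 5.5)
Our plan is to deduce the coproduct formula from Goncharov's general formula for the motivic coproduct of iterated integrals on a punctured projective line with Tate-framed tangential base points \cite[Theorem~1.2 and Section~8]{Goncharov2005}. The first step is to put the situation into his normal form. In the coordinate \(t\) of Proposition~\ref{prop:cotangent-difference-good-reduction}, \(X_N^{\mathrm{diff}}\) is \(\mathbb P^1_F\setminus(\{t_d:d\in J\}\cup\{\infty\})\). The letters \(\kappa_d^\circ=dt/(t-t_d)\) are exactly Goncharov's letters attached to the finite punctures \(t_d\). The tangent vectors \(\vec v_d\) satisfy \(dt(\vec v_d)=1\), which is his standard normalization. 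By Theorem~\ref{thm:cotangent-difference-mixed-tate}, with these \(S\)-integral base points the relevant torsors are objects of \(\operatorname{MT}(\mathcal O_{F,S})\). Hence the classes \(I^{\mathfrak m}(\beta_0;\kappa_{d_1}^\circ|\cdots|\kappa_{d_r}^\circ;\beta_1)\) are his framed classes \(I^{\mathfrak m}(\beta_0;t_{d_1},\ldots,t_{d_r};\beta_1)\) in \(\mathcal A^{\mathcal M}_{F,S}\).

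The second step is to transcribe his formula. It sums over subsequences \(0=i_0<i_1<\cdots<i_k<i_{k+1}=r+1\). The left factor keeps the chosen letters between the outer endpoints. The right factor is the product of the iterated integrals on the gaps, running from the puncture \(t_{d_{i_h}}\) to \(t_{d_{i_{h+1}}}\). We interpret each intermediate point as the tangential base point \(\vec v_{d_{i_h}}\). This gives \eqref{eq:motdiff-goncharov-coproduct}, with the orientation of the tensor factors fixed by our convention for \(\Delta_{\mathcal M}\). Several terms of the formula need checking separately:
\begin{itemize}
\item the boundary terms \(k=0\) and \(k=r\);
\item terms in which two consecutive indices coincide, where the integral is regularized with the same tangent vector at both ends;
\item terms in which the outer base points \(\beta_0,\beta_1\) are \(\vec v_{c_\infty}\), the point at infinity.
\end{itemize}
For the last case we note that no letter has a pole at \(\infty\) other than through \(\sum_d\kappa_d^\circ\). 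Goncharov's framework allows arbitrary tangential endpoints, with the intermediate points only at the letters' punctures, so we keep \(q_{c_\infty}=1/t\) there.

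The third step is closure. Every factor appearing on the right of \eqref{eq:motdiff-goncharov-coproduct} has the form \(I^{\mathfrak m}(\beta;\kappa^\circ_{e_1}|\cdots|\kappa^\circ_{e_s};\beta')\) with \(\beta,\beta'\in\mathcal B_N\). Each \(\kappa^\circ_e=\kappa^{\mathrm{diff}}_{e,c_\infty}\) is a difference form, so every such factor is a generator of \(\mathcal H_N^{\mathrm{diff},\mathfrak m}(\mathcal B_N)\). The generators in general difference letters are finite \(\mathbb Q\)-combinations of these by \eqref{eq:motdiff-signed-expansion}, since \(\varepsilon_{a,b}(d)\in\{0,\pm1\}\). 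Because \(\Delta_{\mathcal M}\) is linear and an algebra morphism, \eqref{eq:motdiff-coproduct-closure} follows.

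For the Hopf structure, the algebra is graded by weight \(r\). It is stable under the product and coproduct of \(\mathcal A^{\mathcal M}_{F,S}\), and its degree-zero part is \(\mathbb Q\), which comes only from empty words. A connected graded sub-bialgebra of a connected graded Hopf algebra is automatically Hopf, because the antipode is defined recursively through \(\Delta\).

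The main obstacle is the second step, specifically the identification of normalizations. We must make sure that the tangential regularization at intermediate punctures and the Tate framings of \cite[Section~8]{Goncharov2005} coincide with those used to define the classes. We must also handle regularization at coinciding endpoints and at \(\vec v_{c_\infty}\). If any of these were mismatched, correction terms would appear, namely powers of motivic \(\log\) of tangent-scaling constants. We would verify the match by checking that \(\mathcal B_N\) is a single system of base points used uniformly on both sides. This also makes the potential correction terms vanish in \(\mathcal A^{\mathcal M}\), because the motivic \(2\pi\sqrt{-1}\) is zero in the framed Hopf algebra.
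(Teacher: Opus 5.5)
Your proposal is correct and follows the paper's proof. You identify the classes with Goncharov's Tate-framed iterated integrals in the single fixed system $\mathcal B_N$ (with $q_{c_\infty}=1/t$ at infinity) and quote his cut formula. You then get closure from the fact that each $\kappa_d^\circ=\kappa^{\mathrm{diff}}_{d,c_\infty}$ is itself a difference form, together with the signed expansion, and you obtain the antipode by the degree recursion in a connected graded sub-bialgebra.

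One side remark is imprecise but harmless. The vanishing of the motivic $2\pi\sqrt{-1}$ in $\mathcal A^{\mathcal M}_{F,S}$ only absorbs discrepancies by roots of unity, such as sign changes of tangent vectors. A rescaling by a non-torsion $S$-unit $\lambda$ would produce nonzero terms in $\log^{\mathfrak m}\lambda$. What actually rules out correction terms is, as you also say and as the paper does, that the same system $\mathcal B_N$ is used in every factor.
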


\begin{proof}
For the Tate-framed classes just defined,
\eqref{eq:motdiff-goncharov-coproduct} is Goncharov's cut formula
\cite[Theorems~1.2 and~3.3]{Goncharov2005}.
The fixed tangential base points are used in every factor.
The assertion with an endpoint at infinity is understood in
the local coordinate \(q_{c_\infty}=1/t\), with its tangent
transported under the coordinate change.

Put
\(H=\mathcal H_N^{\mathrm{diff},\mathfrak m}(\mathcal B_N)\).
Each retained letter \(\kappa_d^\circ\) is a difference form.
Every complementary block is a word in the same forms, and its
endpoints belong to \(\mathcal B_N\). Thus every tensor factor on
the right-hand side of
\eqref{eq:motdiff-goncharov-coproduct} belongs to \(H\).
Multilinearity and \eqref{eq:motdiff-signed-expansion} give the
same conclusion for all generators of \(H\). Since the coproduct
is multiplicative, \eqref{eq:motdiff-coproduct-closure} follows.

Word length gives a nonnegative grading with \(H_0=\mathbb Q\).
The counit restricts to \(H\), so \(H\) is a connected graded
sub-bialgebra. 
For a positive-degree homogeneous element \(x\in H\), write
\[
 \Delta_{\mathcal M}(x)
 =x\otimes1+1\otimes x+\sum_\nu x'_\nu\otimes x''_\nu,
\]
where both factors in the reduced coproduct have smaller positive degrees. The ambient antipode satisfies
\[
 \mathsf S(x)=-x-\sum_\nu\mathsf S(x'_\nu)x''_\nu.
\]
Induction on degree, starting with \(\mathsf S(1)=1\), gives \(\mathsf S(H)\subseteq H\). This proves the Hopf subalgebra assertion.
\end{proof}

\begin{rem}[Coproduct for difference words]
\label{rem:motdiff-explicit-coproduct}
Let \(\lambda_j=\kappa^{\mathrm{diff}}_{a_j,b_j}\). For a cut
\(0=i_0<i_1<\cdots<i_k<i_{k+1}=r+1\) and
\(d_1,\ldots,d_k\in J\), put
\[
 \beta^{(0)}=\beta_0,\qquad
 \beta^{(k+1)}=\beta_1,\qquad
 \beta^{(h)}=\vec v_{d_h}\quad(1\leq h\leq k).
\]
Expanding the retained letters and recombining the omitted
letters gives
\begin{equation*}
\begin{aligned}
&\Delta_{\mathcal M}
 I^{\mathfrak m}
 (\beta_0;\lambda_1|\cdots|\lambda_r;\beta_1)\\
&=
 \sum_{k=0}^{r}
 \sum_{0=i_0<\cdots<i_k<i_{k+1}=r+1}
 \sum_{d_1,\ldots,d_k\in J}
 \left(
  \prod_{h=1}^{k}
  \varepsilon_{a_{i_h},b_{i_h}}(d_h)
 \right)\\
&\qquad\quad{}\times
 I^{\mathfrak m}
 (\beta_0;\kappa_{d_1}^\circ|\cdots|
             \kappa_{d_k}^\circ;\beta_1)
 \otimes
 \prod_{h=0}^{k}
 I^{\mathfrak m}\bigl(
    \beta^{(h)};
    \lambda_{i_h+1}|\cdots|\lambda_{i_{h+1}-1};
    \beta^{(h+1)}\bigr).
\end{aligned}
\label{eq:motdiff-explicit-difference-coproduct}
\end{equation*}
For \(k=0\), the sum over \(d_1,\ldots,d_k\) has one term.
Empty words have value \(1\).
\end{rem}

\begin{eg}[Length two]
With \(\lambda_j=\kappa^{\mathrm{diff}}_{a_j,b_j}\),
\begin{equation*}
\begin{aligned}
&\Delta_{\mathcal M}
 I^{\mathfrak m}
 (\beta_0;\lambda_1|\lambda_2;\beta_1)\\
&=
 I^{\mathfrak m}
 (\beta_0;\lambda_1|\lambda_2;\beta_1)\otimes1
 +1\otimes
 I^{\mathfrak m}
 (\beta_0;\lambda_1|\lambda_2;\beta_1)\\
&\quad+
 \sum_{d\in J}\varepsilon_{a_1,b_1}(d)
 I^{\mathfrak m}(\beta_0;\kappa_d^\circ;\beta_1)
 \otimes
 I^{\mathfrak m}(\vec v_d;\lambda_2;\beta_1)\\
&\quad+
 \sum_{d\in J}\varepsilon_{a_2,b_2}(d)
 I^{\mathfrak m}(\beta_0;\kappa_d^\circ;\beta_1)
 \otimes
 I^{\mathfrak m}(\beta_0;\lambda_1;\vec v_d).
\end{aligned}
\label{eq:motdiff-weight-two-example}
\end{equation*}
The cut points \(\vec v_d\) explain why
\(\mathcal B_N\) contains a fixed tangential base point at
every puncture.
\end{eg}

\section{Low-weight examples and real cyclotomic units}
\label{sec:low-weight-examples}
We illustrate the preceding constructions through explicit evaluations of
cotangent-difference integrals. In length one, these integrals give
logarithms of real cyclotomic \(S\)-units. At higher weights, examples
at level \(4\) recover classical zeta values and polylogarithmic constants.


\subsection{Length one and cyclotomic cross-ratios}
\label{subsec:length-one-cyclotomic-cross-ratios}
We first express length-one cotangent-difference integrals as logarithms
of cross-ratios of cyclotomic points.

Let \(p,q,a,b\in\mathbb Z/N\mathbb Z\) be pairwise distinct, and let
\(\gamma:\theta_p\rightsquigarrow\theta_q\) be an admissible path.
Since
\[
 \eta_{a,b}^{\mathrm{diff}}
 =
 d\log\frac{\sin(\theta-\theta_a)}
              {\sin(\theta-\theta_b)},
\]
we obtain
\begin{equation*}
\label{eq:length-one-difference-cross-ratio}
 C_\gamma^{\mathrm{diff},\reg}(p;\,(a,b);\,q)
 =
 \log_\gamma R(p,q;a,b),
\end{equation*}
where
\[
 R(p,q;a,b)
 :=
 \frac{
  (\xi_q-\xi_a)(\xi_p-\xi_b)
 }{
  (\xi_q-\xi_b)(\xi_p-\xi_a)
 }.
\]
Here \(\log_\gamma R(p,q;a,b)\) denotes the logarithm
along \(\gamma\), rather than an independently chosen branch of the
logarithm. No endpoint regularization is needed under the present
assumptions.

Indeed, the identity
\[
 \xi_c-\xi_d
 =
 2\sqrt{-1}\,
 e^{\sqrt{-1}(\theta_c+\theta_d)}
 \sin(\theta_c-\theta_d)
\]
gives
\[
 R(p,q;a,b)
 =
 \frac{
  \sin(\theta_q-\theta_a)\sin(\theta_p-\theta_b)
 }{
  \sin(\theta_q-\theta_b)\sin(\theta_p-\theta_a)
 }.
\]
The exponential factors cancel, and this expression is independent
of the chosen real lifts of the angular coordinates.

\begin{prop}
\label{prop:cyclotomic-cross-ratio-s-unit}
The cross-ratio \(R(p,q;a,b)\) belongs to
\(\mathcal O_{F,S}^{\times}\). More precisely, it is a real
cyclotomic \(S\)-unit.
\end{prop}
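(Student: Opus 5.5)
We need to show two things: that $R(p,q;a,b)$ is an $S$-unit in the cyclotomic field $E$, and that it lies in $F$ (it is real). The plan follows the argument used in the proof of Proposition~\ref{prop:cotangent-difference-good-reduction}.

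\emph{Step 1 (units in $E$).} For distinct $c,d\in\mathbb Z/N\mathbb Z$, write
\[
 \xi_c-\xi_d=\xi_d(\xi_{c-d}-1).
\]
Here $\xi_{c-d}=\zeta_N^{k}$ with $k=c-d\not\equiv0$. The element $1-\zeta_N^k$ divides $\prod_{j=1}^{N-1}(1-\zeta_N^j)=N$ in $\mathcal O_E$, since the right-hand side is $\Phi(1)$ for $\Phi(x)=(x^N-1)/(x-1)$. Hence $1-\zeta_N^k$ is a unit in $\mathcal O_E[1/N]$. Because $p,q,a,b$ are pairwise distinct, each of the four differences in $R(p,q;a,b)$ is nonzero. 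Therefore $R\in\mathcal O_E[1/N]^\times$.

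\emph{Step 2 (reality).} Apply the sine identity displayed just before the proposition. It writes $R$ as a quotient of products of $\sin(\theta_c-\theta_d)$, all of which are real and nonzero. So $R\in\mathbb R\cap E=F$ under the fixed embedding. As an alternative check, complex conjugation sends $\xi_c-\xi_d$ to $-\xi_c^{-1}\xi_d^{-1}(\xi_c-\xi_d)$. The prefactors then cancel in the cross-ratio because each of $p,q,a,b$ appears once in the numerator and once in the denominator. Either way, $\overline R=R$, so $R\in F$.

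\emph{Step 3 (descent of the unit property).} We have $R\in F$, and both $R$ and $R^{-1}$ are integral over $\mathcal O_F[1/N]$, being $S$-integers of $E$. Since $\mathcal O_F[1/N]$ is integrally closed in $F$, both lie in $\mathcal O_F[1/N]=\mathcal O_{F,S}$. Hence $R\in\mathcal O_{F,S}^\times$. Being built from differences of roots of unity, $R$ is a cyclotomic $S$-unit, and it is real.

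There is no serious obstacle in this argument. The only step requiring care is Step~1: we must note that $1-\zeta_N^k$ is a unit away from $N$ even when $N/\gcd(k,N)$ is a prime power, where it is not a unit in $\mathcal O_E$ itself. Inverting $N$ handles this case uniformly.
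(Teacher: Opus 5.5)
Your proof is correct and follows the paper's own argument: reality of $R$ from the sine expression (so $R\in E\cap\mathbb R=F$), the $S$-unit property in $E$ from $\xi_c-\xi_d=\xi_d(\zeta_N^{c-d}-1)$, and descent of that property from $E$ to $F$. You make explicit two points the paper leaves implicit, namely that $1-\zeta_N^{k}$ divides $N=\prod_{j=1}^{N-1}(1-\zeta_N^j)$, and that $R^{\pm1}\in\mathcal O_F[1/N]$ because $\mathcal O_F[1/N]$ is integrally closed in $F$.
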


\begin{proof}
The cross-ratio belongs to \(E\), and its sine expression shows that
it is real. Hence it belongs to \(F=E^+\).

Every nonzero difference of \(N\)-th roots of unity is a unit after
inverting the primes above \(N\). Indeed,
\[
 \xi_c-\xi_d=\xi_d(\zeta_N^{c-d}-1),
\]
and the prime divisors of \(1-\zeta_N^{c-d}\) lie above primes
dividing \(N\). Thus \(R(p,q;a,b)\) is an \(S\)-unit in \(E\).
Since it belongs to \(F\), it lies in
\(\mathcal O_{F,S}^{\times}\). Its expression as a quotient of
cyclotomic differences gives the final assertion.
\end{proof}

For \(m\neq0\) in \(\mathbb Z/N\mathbb Z\), put
\[
 u_m
 :=
 (1-\zeta_N^m)(1-\zeta_N^{-m})
 =
 4\sin^2\left(\frac{\pi m}{N}\right).
\]
These are totally positive elements of
\(\mathcal O_{F,S}^{\times}\), and
\[
 R(p,q;a,b)^2
 =
 \frac{u_{q-a}u_{p-b}}{u_{q-b}u_{p-a}}.
\]
Consequently,
\begin{equation*}
\label{eq:real-part-length-one-difference}
 \operatorname{Re}
 C_\gamma^{\mathrm{diff},\reg}(p;\,(a,b);\,q)
 =
 \frac12\log
 \frac{u_{q-a}u_{p-b}}{u_{q-b}u_{p-a}}.
\end{equation*}

If \(\gamma\) is an increasing real interval without intermediate
punctures, then the integral is real and
\(R(p,q;a,b)>0\). In this case,
\[
 C_\gamma^{\mathrm{diff},\reg}(p;\,(a,b);\,q)
 =
 \log_\gamma R(p,q;a,b).
\]
For a general complex path, the value may acquire logarithmic
monodromy and need not be real.

\begin{rem}
A real cyclotomic \(S\)-unit need not be a unit in
\(\mathcal O_F\): primes above \(N\) are allowed in its denominator.
The proposition identifies a concrete family of such \(S\)-units;
it does not assert that these cross-ratios generate all of
\(\mathcal O_{F,S}^{\times}\). For the classical theory of circular
units, see Sinnott~\cite{Sinnott1978}.
\end{rem}

\subsection{A real cyclotomic unit at level five}
\label{subsec:level-five-cyclotomic-unit}
At level \(5\), the length-one construction recovers the logarithm
of a genuine unit in the real cyclotomic field.

Take \(N=5\), so that \(F=\mathbb Q(\sqrt5)\), and let
\(\gamma_5\) be the increasing real interval from \(0\) to
\(\pi/5\). The forms with labels \(2\) and \(3\) have no poles
on this interval. Therefore
\[
\begin{aligned}
 C_{\gamma_5}^{\mathrm{diff},\reg}(0;\,(2,3);\,1)
 &=
 \log\frac{\sin(\pi/5)}{\sin(2\pi/5)}
 =
 -\log\left(2\cos\frac{\pi}{5}\right)
 =
 -\log\frac{1+\sqrt5}{2}.
\end{aligned}
\]
Put
\[
 \varepsilon:=\frac{1+\sqrt5}{2}.
\]
Since \(\varepsilon^2-\varepsilon-1=0\), both \(\varepsilon\) and
\(\varepsilon^{-1}=\varepsilon-1\) are algebraic integers.
Thus \(\varepsilon\in\mathcal O_F^\times\). Moreover,
\[
 \varepsilon
 =
 -\zeta_5^2(1+\zeta_5)
 =
 -\zeta_5^2\frac{1-\zeta_5^2}{1-\zeta_5},
\]
which exhibits it as a real cyclotomic unit.

Repeated cotangent-difference forms give higher-weight examples immediately. By the
shuffle identity,
\[
 C_{\gamma_5}^{\mathrm{diff},\reg}
 \left(0;\,
 \underbrace{(2,3),\ldots,(2,3)}_{r};
 \,1\right)
 =
 \frac{(-\log\varepsilon)^r}{r!}.
\]

\subsection{Level four and classical polylogarithms}
\label{subsec:level-four-polylogarithms}
At level \(4\), the difference punctured line has a particularly
simple coordinate, making the first few weights explicitly computable.

Take \(N=4\) and set
\[
 t:=\tan\theta.
\]
The four cotangent points become
\[
 \theta_0\longmapsto0,\qquad
 \theta_1\longmapsto1,\qquad
 \theta_2\longmapsto\infty,\qquad
 \theta_3\longmapsto-1.
\]
Thus, in this coordinate,
\[
 X_4^{\mathrm{diff}}
 \simeq
 \mathbb P^1_{\mathbb Q}\setminus\{0,1,-1,\infty\}.
\]
The difference cotangent-difference forms satisfy
\begin{equation*}
\label{eq:level-four-difference-kernels}
 \eta_{0,2}^{\mathrm{diff}}=\frac{dt}{t},
 \qquad
 \eta_{1,2}^{\mathrm{diff}}=\frac{dt}{t-1},
 \qquad
 \eta_{3,2}^{\mathrm{diff}}=\frac{dt}{t+1}.
\end{equation*}

Let \(\gamma_4\) be the increasing angular interval
\(0\rightsquigarrow\pi/4\), whose image in the \(t\)-line is
\(0\rightsquigarrow1\). We use the tangential vectors
\(\partial_t\) at \(0\) and \(-\partial_t\) at \(1\), and transport
them to the angular coordinate.

\begin{eg}[Length one]
The nonsingular differential form gives
\[
 C_{\gamma_4}^{\mathrm{diff},\reg}(0;\,(3,2);\,1)
 =
 \int_0^1\frac{dt}{1+t}
 =
 \log2.
\]
For comparison, the singular differential forms have regularized values
\[
 C_{\gamma_4}^{\mathrm{diff},\reg}(0;\,(0,2);\,1)=0,
 \qquad
 C_{\gamma_4}^{\mathrm{diff},\reg}(0;\,(1,2);\,1)=0.
\]
These equalities use the specified tangential vectors. For example,
the constant term of
\(\int_{\epsilon}^{1}dt/t=-\log\epsilon\) is zero.
\end{eg}

\begin{prop}
\label{prop:level-four-single-zeta-values}
For every integer \(r\geq2\),
\[
 C_{\gamma_4}^{\mathrm{diff},\reg}
 \left(0;\,(1,2),
 \underbrace{(0,2),\ldots,(0,2)}_{r-1};
 \,1\right)
 =
 -\zeta(r),
\]
and
\[
 C_{\gamma_4}^{\mathrm{diff},\reg}
 \left(0;\,(3,2),
 \underbrace{(0,2),\ldots,(0,2)}_{r-1};
 \,1\right)
 =
 \left(1-2^{1-r}\right)\zeta(r).
\]
All these integrals converge without regularization.
\end{prop}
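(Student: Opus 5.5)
The plan is to work entirely in the coordinate \(t=\tan\theta\). There the relevant difference forms become \(\eta_{1,2}^{\mathrm{diff}}=dt/(t-1)\), \(\eta_{3,2}^{\mathrm{diff}}=dt/(t+1)\) and \(\eta_{0,2}^{\mathrm{diff}}=dt/t\), as displayed just before the statement. The path \(\gamma_4\) becomes the real interval \(0\rightsquigarrow1\). By the pullback identity of Proposition~\ref{prop:cotangent-difference-kernels}, in the form \eqref{eq:cotangent-difference-algebraic-integral} for \(N=4\), each integral equals the iterated integral over \(0<t_1<\cdots<t_r<1\) of the corresponding forms. The first letter, \(dt_1/(t_1\mp1)\), is the one attached to the earliest variable, by the ordering convention of Definition~\ref{def:iterated-cotangent-integral}.

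I would evaluate this iterated integral from the inside out, using the polylogarithm recursion. The innermost step gives
\[
 \int_0^{t}\frac{ds}{s-1}=\log(1-t)=-\operatorname{Li}_1(t),
 \qquad
 \int_0^{t}\frac{ds}{s+1}=\log(1+t)=-\operatorname{Li}_1(-t).
\]
Each subsequent letter \(dt/t\) then acts through \(\int_0^t\operatorname{Li}_k(\pm s)\,ds/s=\operatorname{Li}_{k+1}(\pm t)\). This holds termwise on power series for \(|t|<1\) and extends to the closed interval by monotone or Abel convergence. After \(r-1\) applications the two integrals are \(-\operatorname{Li}_r(1)=-\zeta(r)\) and \(-\operatorname{Li}_r(-1)\). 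It remains to use the classical identity \(-\operatorname{Li}_r(-1)=\sum_{n\ge1}(-1)^{n-1}n^{-r}=(1-2^{1-r})\zeta(r)\), which follows by splitting the series into even and odd terms.

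The step needing care is the convergence claim, since the tangential data then play no role. Only two places could cause trouble.

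At \(t=0\), each letter \(dt/t\) is integrated against an inner function \(\operatorname{Li}_k(\pm t)=O(t)\), so the integrand is bounded. The first letter is regular at \(0\).

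At \(t=1\), the letter \(dt/(t-1)\) appears only as the first letter. Its primitive \(\log(1-t)\) is integrable near \(1\), and all later inner functions \(\operatorname{Li}_k(t)\) with \(k\ge2\) stay bounded. The hypothesis \(r\ge2\) guarantees that the outermost letter is \(dt/t\), which is regular at \(1\), so the final integral converges absolutely. For the \((3,2)\) family no pole is ever met on \([0,1]\) except the harmless \(dt/t\) at \(0\).

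Once absolute convergence is established, the regularized value coincides with the ordinary integral, as noted after the definition of \(\operatorname{reg}\). This completes the proof. I expect the only real obstacle to be stating this endpoint estimate cleanly at \(t=1\) in the \(\zeta(r)\) case; I would handle it by bounding \(|\operatorname{Li}_k(t)|\le\zeta(k)\) for \(k\ge2\).
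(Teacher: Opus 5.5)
Your proposal is correct and follows essentially the same route as the paper. Both integrate inside-out in \(t=\tan\theta\) via \(\frac{d}{dT}\operatorname{Li}_k(\pm T)=\operatorname{Li}_{k-1}(\pm T)/T\) for \(T<1\), pass to \(T\to1\), and use \(\operatorname{Li}_r(1)=\zeta(r)\) and \(\operatorname{Li}_r(-1)=-(1-2^{1-r})\zeta(r)\); for absolute convergence, the paper replaces your endpoint estimates with one observation: each form has constant sign on \(0<t<1\), so the integral of the absolute value is the absolute value of the convergent signed integral.
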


\begin{proof}
With upper endpoint \(T<1\), successive integration and
\[
 \frac{d}{dT}\operatorname{Li}_r(\pm T)
 =
 \frac{\operatorname{Li}_{r-1}(\pm T)}{T}
\]
identify the two iterated integrals with
\(-\operatorname{Li}_r(T)\) and
\(-\operatorname{Li}_r(-T)\), respectively.
For \(r\geq2\), their limits at \(T=1\) exist, and
\[
 \operatorname{Li}_r(1)=\zeta(r),
 \qquad
 \operatorname{Li}_r(-1)
 =
 -\left(1-2^{1-r}\right)\zeta(r).
\]
Absolute convergence follows from the same calculation: on
\(0<t<1\), each constituent differential form has constant sign, so the
integral of the absolute value is the absolute value of the
corresponding convergent integral.
\end{proof}

These examples display both features of the construction. The
length-one values are logarithms of real cyclotomic units or
\(S\)-units, while higher lengths produce genuinely iterated
polylogarithmic periods. In the level-\(4\) coordinate, the punctures
remain disjoint over \(\mathbb Z[1/2]\), and the chosen tangent
vectors generate the corresponding integral tangent lines.
Theorem~\ref{thm:cotangent-difference-mixed-tate} therefore places
these examples
\(
 \operatorname{Per}\bigl(\operatorname{MT}(\mathbb Z[1/2])\bigr).
\)
Their cyclotomic expressions and their mixed Tate interpretation
thus describe the same integrals from complementary viewpoints.

\bigskip
{\bf Acknowledgements.}
The work of J.~C.~ was partially supported by the following grants: International Exchange and Development Program for Young Faculty at USTB (Grant No. QNXM20250045), the University of Science and Technology Beijing Foundation (Grant No. 00007886) and the Fundamental Research Funds for the Central Universities (Grant No. 06320202). The work of H.~F.~ was partially supported by the following grants: JSPS KAKENHI JP24K00520, JP24K21510. 
The authors are grateful to Henrik Bachmann and Ryota Umezawa for informing them about the related references.

\medskip
{\bf Declaration of Generative AI Use.}
ChatGPT (OpenAI), a generative AI tool, was used during the
preparation of this manuscript to assist with the exploration and
exposition of selected mathematical arguments. In particular, it
suggested studying iterated integrals of differences of cotangent
kernels. All mathematical statements, calculations, and proofs
included in the final manuscript were critically reviewed and
independently verified by the authors, who take full responsibility
for its content.


\end{document}